\documentclass[a4paper]{amsart}
\usepackage{amscd}
\usepackage{amsmath}
\usepackage{amssymb}
\usepackage{amsthm}
\usepackage{bbm}
\usepackage{stmaryrd}

\usepackage[T1]{fontenc}

\newif\ifpdf
\ifx\pdfoutput\undefined
   \pdffalse        
\else
   \pdfoutput=1     
   \pdftrue
\fi

\ifpdf
   \usepackage[pdftex]{graphicx}
   \pdfadjustspacing=1
   \pdfcompresslevel=9
\else
   \usepackage{graphicx}
\fi

\frenchspacing

\numberwithin{equation}{section} \swapnumbers

\newtheorem{satz}{Satz}[section]

\newtheorem{theorem}[satz]{Theorem}
\newtheorem{proposition}[satz]{Proposition}

\newtheorem{lemma}[satz]{Lemma}

\newtheorem{definition}[satz]{Definition}

\newtheorem{remark}[satz]{Remark}

\newtheorem{example}[satz]{Example}

\begin{document}

\hyphenation{li-te-ra-tu-re Wel-fen-gar-ten}

\title[Some refinements of existence results for SPDEs]{Some refinements of existence results for SPDEs driven by Wiener processes and Poisson random measures}
\author{Stefan Tappe}
\address{Leibniz Universit\"{a}t Hannover, Institut f\"{u}r Mathematische Stochastik, Welfengarten 1, 30167 Hannover, Germany}
\email{tappe@stochastik.uni-hannover.de}
\thanks{The author is grateful to an anonymous referee for valuable comments and suggestions.}
\begin{abstract}
We provide existence and uniqueness of global (and local) mild solutions for a general class of semilinear stochastic partial differential equations driven by Wiener processes and Poisson random measures under local Lipschitz and linear growth (or local boundedness, resp.) conditions. The so-called ``method of the moving frame'' allows us to reduce the SPDE problems to SDE problems.
\end{abstract}
\keywords{Semilinear SPDE with jumps,
Poisson random measure, mild solution, method of the moving frame}
\subjclass[2010]{60H15, 60G57}
\maketitle

\section{Introduction}

Semilinear stochastic partial differential equations (SPDEs) on Hilbert spaces, being of the type
\begin{align}\label{SPDE-Wiener}
\left\{
\begin{array}{rcl}
dZ_t & = & (A Z_t + \alpha(t,Z_t)) dt + \sigma(t,Z_t)dW_t \medskip
\\ Z_0 & = & z_0,
\end{array}
\right.
\end{align}
have widely been studied in the literature, see e.g. \cite{Da_Prato, Prevot-Roeckner, Zhang, Atma-book}. In equation (\ref{SPDE-Wiener}), $A$ denotes the generator of a strongly continuous semigroup, and $W$ is a trace class Wiener process. In view of applications, this framework has been extended by adding jumps to the SPDE (\ref{SPDE-Wiener}). More precisely, consider a SPDE of the type
\begin{align}\label{SPDE-Poisson}
\left\{
\begin{array}{rcl}
dZ_t & = & (A Z_t + \alpha(t,Z_t)) dt + \sigma(t,Z_t)dW_t
\\ && + \int_E \gamma(t,Z_{t-},x) (\mu(dt,dx) - F(dx)dt) \medskip
\\ Z_0 & = & z_0,
\end{array}
\right.
\end{align}
where $\mu$ denotes a Poisson random measure on some mark space $(E,\mathcal{E})$ with $dt \otimes F(dx)$ being its compensator. SPDEs of this type have been investigated in \cite{Marinelli-Prevot-Roeckner, SPDE}, see also \cite{Kotelenez-82, Kotelenez-84, Knoche, P-Z-book, Ruediger-mild, Prevot}, where SPDEs with jump noises have been studied. 

The goal of the present paper is to extend results and methods for SPDEs of the type (\ref{SPDE-Poisson}) in the following directions:
\begin{itemize}
\item We consider more general SPDEs of the form
\begin{align}\label{SPDE}
\left\{
\begin{array}{rcl}
dZ_t & = & (A Z_t + \alpha(t,Z_t)) dt + \sigma(t,Z_t)dW_t 
\\ && + \int_B \gamma(t,Z_{t-},x) (\mu(dt,dx) - F(dx)dt) 
\\ && + \int_{B^c} \gamma(t,Z_{t-},x) \mu(dt,dx) \medskip
\\ Z_0 & = & z_0,
\end{array}
\right.
\end{align}
where $B \in \mathcal{E}$ is a set with $F(B^c) < \infty$. Then, the integral $\int_B$ represents the small jumps, and $\int_{B^c}$ represents the large jumps of the solution process. Similar SDEs have been considered in finite dimension in \cite[Sec. II.2.c]{Jacod-Shiryaev} and in infinite dimension in \cite{Cao}.

\item We will prove the following results (see Theorem \ref{thm-main}) concerning existence and uniqueness of local and global mild solutions to (\ref{SPDE}):
\begin{enumerate}
\item If $(\alpha,\sigma,\gamma|_B)$ are locally Lipschitz and of linear growth, then existence and uniqueness of global mild solutions to (\ref{SPDE}) holds.

\item If $(\alpha,\sigma,\gamma|_B)$ are locally Lipschitz and locally bounded, then existence and uniqueness of local mild solutions to (\ref{SPDE}) holds.

\item If $(\alpha,\sigma,\gamma|_B)$ are locally Lipschitz, then uniqueness of mild solutions to (\ref{SPDE}) holds.
\end{enumerate}
In particular, the result that local Lipschitz and linear growth conditions ensure existence and uniqueness of global mild solutions does not seem to be well-known for SPDEs, as most of the mentioned references impose global Lipschitz conditions. An exception is the reference \cite{Zhang}, where the author treats Wiener process driven SPDEs of the type (\ref{SPDE-Wiener}), even on 2-smooth Banach spaces, and provides existence and uniqueness under local Lipschitz and linear growth conditions. In \cite{Zhang}, the crucial assumption on the operator $A$ is that is generates an analytic semigroup, while our results hold true for every pseudo-contractive semigroup.

\item We reduce the proofs of these SPDE results to the analysis of SDE problems. This is due to the ``method of the moving frame'', which has been presented in \cite{SPDE}. As a direct consequence, we obtain that any mild solution to (\ref{SPDE}) is c\`{a}dl\`{a}g.
\end{itemize}
As just mentioned, we shall utilize the ``method of the moving frame'' from \cite{SPDE}, which allows us to reduce the SPDE problems to SDE problems. Therefore, we will be concerned with SDEs in Hilbert spaces being of the type 
\begin{align}\label{SDE}
\left\{
\begin{array}{rcl}
d Y_t & = & a(t,Y_t) dt + b(t,Y_t)dW_t + \int_B c(t,Y_{t-},x) (\mu(dt,dx) - F(dx)dt) 
\\ && + \int_{B^c} c(t,Y_{t-},x) \mu(dt,dx) \medskip
\\ Y_0 & = & y_0.
\end{array}
\right.
\end{align}
By using the technique of interlacing solutions at jump times (which, in particular cases has been applied e.g. in \cite[Sec. 6.2]{Applebaum} and \cite[Sec. 9.7]{P-Z-book}), we can reduce the SDE (\ref{SDE}) to SDEs of the form
\begin{align}\label{SDE-without-j}
\left\{
\begin{array}{rcl}
d Y_t & = & a(t,Y_t) dt + b(t,Y_t)dW_t + \int_B c(t,Y_{t-},x) (\mu(dt,dx) - F(dx)dt) \medskip
\\ Y_0 & = & y_0
\end{array}
\right.
\end{align}
without large jumps, and for those SDEs suitable techniques and results are available in the literature. This allows us to derive existence and uniqueness results for the SDE (\ref{SDE}), which are subject to the regularity conditions described above. We point out that the reference \cite{Cao} also studies Hilbert space valued SDEs of the type (\ref{SDE}) and provides an existence and uniqueness result considerably going beyond the classical results which impose global Lipschitz conditions. In Section \ref{sec-successive}, we provide a comparison of our existence and uniqueness result for SDEs of the type (\ref{SDE}) with that from \cite{Cao}.

The remainder of this paper is organized as follows: In Section \ref{sec-prelim} we provide the required preliminaries and notation. In Section \ref{sec-SDE} we prove existence and uniqueness results for (local) strong solutions to SDEs of the form (\ref{SDE}), and in Section \ref{sec-SPDE} we prove existence and uniqueness results for (local) mild solutions to SPDEs of the form (\ref{SPDE}) by using the ``method of the moving frame''.

\section{Preliminaries and notation}\label{sec-prelim}

In this section, we provide the required preliminary results and some basic notation.

Throughout this text, let $(\Omega, \mathcal{F}, \mathbb{F}, \mathbb{P})$ with $\mathbb{F} = (\mathcal{F}_t)_{t \geq 0}$ be a filtered probability space satisfying the usual conditions.

Let $U$ be a separable Hilbert space and let $Q \in L(U)$ be a nuclear, self-adjoint, positive definite linear operator. Then, there exist an orthonormal basis $(e_j)_{j \in \mathbb{N}}$ of $U$ and a sequence $(\lambda_j)_{j \in \mathbb{N}} \subset (0,\infty)$ with $\sum_{j \in \mathbb{N}} \lambda_j < \infty$ such that
\begin{align*}
Qe_j = \lambda_j e_j \quad \text{for all $j \in \mathbb{N}$,}
\end{align*}
namely, the $\lambda_j$ are the eigenvalues of $Q$, and each $e_j$
is an eigenvector corresponding to $\lambda_j$. The space $U_0 := Q^{1/2}(U)$, equipped with the inner product
\begin{align*}
\langle u,v \rangle_{U_0} := \langle Q^{-1/2} u, Q^{-1/2} v \rangle_{U},
\end{align*}
is another separable Hilbert space
and $( \sqrt{\lambda_j} e_j )_{j \in \mathbb{N}}$ is an orthonormal basis.
Let $W$ be an $U$-valued $Q$-Wiener process, see \cite[p. 86, 87]{Da_Prato}. For another separable Hilbert space $H$, we denote by $L_2^0(H) := L_2(U_0,H)$ the space of Hilbert-Schmidt
operators from $U_0$ into $H$, which, endowed with the
Hilbert-Schmidt norm
\begin{align*}
\| \Phi \|_{L_2^0(H)} := \bigg( \sum_{j \in \mathbb{N}} \| \Phi ( \sqrt{\lambda_j} e_j ) \|^2 \bigg)^{1/2},
\quad \Phi \in L_2^0(H)
\end{align*}
itself is a separable Hilbert space.

Let $(E,\mathcal{E})$ be a measurable space which we assume to be a
Blackwell space (see \cite{Dellacherie,Getoor}). We remark
that every Polish space with its Borel $\sigma$-field is a Blackwell
space.
Furthermore, let $\mu$ be a time-homogeneous Poisson random measure on
$\mathbb{R}_+ \times E$, see \cite[Def. II.1.20]{Jacod-Shiryaev}.
Then its compensator is of the form $dt \otimes F(dx)$, where $F$ is
a $\sigma$-finite measure on $(E,\mathcal{E})$.

For the following definitions, let $\tau$ be a finite stopping time.
\begin{itemize}
\item We define the new filtration
$\mathbb{F}^{(\tau)} = (\mathcal{F}_t^{(\tau)})_{t \geq 0}$ by
\begin{align*}
\mathcal{F}_t^{(\tau)} := \mathcal{F}_{\tau + t}, \quad t \geq 0.
\end{align*}

\item We define the new $U$-valued process $W^{(\tau)}$ by
\begin{align*}
W_t^{(\tau)} := W_{\tau + t} - W_{\tau}, \quad t \geq 0.
\end{align*}

\item We define the new random measure ${\mu}^{(\tau)}$ on $\mathbb{R}_+ \times E$ by
\begin{align*}
{\mu}^{(\tau)}(\omega;B) := \mu(\omega;B_{\tau(\omega)}), \quad \omega \in \Omega \text{ and } B \in \mathcal{B}(\mathbb{R}_+) \otimes \mathcal{E},
\end{align*}
where we use the notation
\begin{align*}
B_{\tau} := \{ (t + \tau,x) \in \mathbb{R}_+ \times E : (t,x) \in B
\}.
\end{align*}
\end{itemize}

Then, $W^{(\tau)}$ is a $\mathbb{F}^{(\tau)}$-adapted $Q$-Wiener process and $\mu^{(\tau)}$ is a time-homogeneous Poisson random measure relative to the filtration $\mathbb{F}^{(\tau)}$ with compensator $dt \otimes F(dx)$, cf. \cite[Lemma 4.6]{Positivity}. 

\begin{lemma}\label{lemma-F-stopping-time}
Let $\varrho$ be another stopping time. Then, the mapping $(\varrho-\tau)^+$ is a $\mathbb{F}^{(\tau)}$-stopping time.
\end{lemma}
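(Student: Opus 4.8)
The plan is to check the defining property of a stopping time directly: I must show that $S := (\varrho-\tau)^+$ satisfies $\{S \le t\} \in \mathcal{F}_t^{(\tau)}$ for every $t \ge 0$. Since $\mathcal{F}_t^{(\tau)} = \mathcal{F}_{\tau+t}$ by definition, the whole task reduces to identifying the level set $\{S \le t\}$ and locating it inside $\mathcal{F}_{\tau+t}$. First I would rewrite this level set. Fixing $t \ge 0$, the nonnegativity of the threshold makes the positive part harmless: on $\{\varrho \le \tau\}$ both $(\varrho-\tau)^+ \le t$ and $\varrho \le \tau+t$ hold trivially (as $t \ge 0$), while on $\{\varrho > \tau\}$ the positive part equals $\varrho - \tau$, so $(\varrho-\tau)^+ \le t \iff \varrho \le \tau+t$; the case $\varrho = \infty$ is excluded from both sides. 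Hence
\begin{align*}
\{S \le t\} = \{\varrho \le \tau + t\}, \qquad t \ge 0.
\end{align*}

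Next I would observe that, for each fixed $t \ge 0$, the sum $\tau + t$ is again an $\mathbb{F}$-stopping time, so that $\{\varrho \le \tau+t\}$ is nothing but a comparison of the two $\mathbb{F}$-stopping times $\varrho$ and $\tau+t$. The main point is then the standard fact from the general theory of stopping times that, for any two $\mathbb{F}$-stopping times $R$ and $T$, one has $\{R \le T\} \in \mathcal{F}_T$. Applying this with $R = \varrho$ and $T = \tau+t$ yields $\{\varrho \le \tau+t\} \in \mathcal{F}_{\tau+t} = \mathcal{F}_t^{(\tau)}$, which is exactly what is required; combining over all $t \ge 0$ then shows that $(\varrho-\tau)^+$ is an $\mathbb{F}^{(\tau)}$-stopping time.

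I expect the only step needing genuine (though entirely routine) justification to be this last inclusion, so I would record a brief verification in case one does not wish to quote it. To see $\{R \le T\} \in \mathcal{F}_T$, it suffices to check $\{R \le T\} \cap \{T \le s\} \in \mathcal{F}_s$ for all $s \ge 0$, and one passes to the complementary event inside $\{T \le s\}$. Using that $T < R$ holds precisely when there is a rational $q$ with $T \le q < R$, and that any such witness can be taken in $[0,s]$ once $T \le s$, one gets the decomposition
\begin{align*}
\{T < R\} \cap \{T \le s\} = \bigcup_{q \in \mathbb{Q} \cap [0,s]} \big( \{T \le q\} \cap \{R > q\} \big).
\end{align*}
For each $q \le s$ one has $\{T \le q\} \in \mathcal{F}_q \subseteq \mathcal{F}_s$ and $\{R > q\} = \{R \le q\}^c \in \mathcal{F}_q \subseteq \mathcal{F}_s$, so the right-hand side lies in $\mathcal{F}_s$; taking the complement within $\{T \le s\} \in \mathcal{F}_s$ gives $\{R \le T\} \cap \{T \le s\} \in \mathcal{F}_s$. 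This closes the argument and establishes the claim.
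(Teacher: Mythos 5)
Your main argument coincides with the paper's: reduce to the identity $\{(\varrho-\tau)^+ \leq t\} = \{\varrho \leq \tau+t\}$ and then invoke the comparison fact for the two $\mathbb{F}$-stopping times $\varrho$ and $\tau+t$ to conclude $\{\varrho \leq \tau+t\} \in \mathcal{F}_{\tau+t} = \mathcal{F}_t^{(\tau)}$. The paper's proof is exactly this one-liner, with the comparison fact left implicit; you additionally spell out its proof, which is where a genuine flaw appears.

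The set identity you record,
\begin{align*}
\{T < R\} \cap \{T \leq s\} = \bigcup_{q \in \mathbb{Q} \cap [0,s]} \big( \{T \leq q\} \cap \{R > q\} \big),
\end{align*}
is false in general: if $s$ is irrational, the event $\{T = s\} \cap \{R > s\}$ belongs to the left-hand side, but no rational $q \in [0,s]$ can satisfy $T \leq q$ on this event, since the only candidate would be $q = s$ itself. Your claim that ``any such witness can be taken in $[0,s]$ once $T \leq s$'' fails precisely in this boundary case. The repair is routine: either adjoin the (possibly irrational) endpoint $s$ to the index set, or split according to whether $R \leq s$ or $R > s$ and write
\begin{align*}
\{T < R\} \cap \{T \leq s\} = \bigg( \bigcup_{q \in \mathbb{Q} \cap [0,s)} \big( \{T \leq q\} \cap \{R > q\} \big) \bigg) \cup \big( \{T \leq s\} \cap \{R > s\} \big),
\end{align*}
where the extra term lies in $\mathcal{F}_s$ because $\{T \leq s\} \in \mathcal{F}_s$ and $\{R > s\} = \{R \leq s\}^c \in \mathcal{F}_s$. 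With this correction every term of the decomposition is in $\mathcal{F}_s$, the complementation step goes through unchanged, and your proof of the lemma is complete; the statement itself is unaffected by the slip.
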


\begin{proof}
For every $t \in \mathbb{R}_+$ we have
\begin{align*}
\{ (\varrho-\tau)^+ \leq t \} = \{ \varrho - \tau \leq t \} = \{ \varrho \leq \tau + t \} \in \mathcal{F}_{\tau + t} = \mathcal{F}_t^{(\tau)},
\end{align*} 
showing that $(\varrho-\tau)^+$ is a $\mathbb{F}^{(\tau)}$-stopping time.
\end{proof}

Denoting by $\mathcal{P}^{(\tau)}$ the predictable $\sigma$-algebra relative to the filtration $\mathbb{F}^{(\tau)}$, we have the following auxiliary result.

\begin{lemma}\label{lemma-pred-meas}
The following statements are true:
\begin{enumerate}
\item The mapping 
\begin{align*}
\theta_{\tau} : \Omega \times \mathbb{R}_+ \rightarrow \Omega \times \mathbb{R}_+, \quad \theta_{\tau}(\omega,t) := (\omega, \tau(\omega)+t) 
\end{align*}
is $\mathcal{P}^{(\tau)}$--$\mathcal{P}$--measurable.

\item The mapping 
\begin{align*}
\vartheta_{\tau} : \Omega \rightarrow \Omega \times \mathbb{R}_+, \quad \vartheta_{\tau}(\omega) := (\omega, \tau(\omega)) 
\end{align*}
is $\mathcal{F}_{\tau}$--$\mathcal{P}$--measurable.
\end{enumerate}
\end{lemma}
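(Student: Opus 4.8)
The plan is to verify both measurability statements by checking preimages of a generating family of the predictable $\sigma$-algebra $\mathcal{P}$. Recall that $\mathcal{P}$ is generated by the predictable rectangles $A \times (s,t]$ with $0 \le s < t$ and $A \in \mathcal{F}_s$, together with the sets $A_0 \times \{0\}$ with $A_0 \in \mathcal{F}_0$. Since taking preimages commutes with the $\sigma$-algebra operations, it suffices to show that the preimages of these generators under $\theta_\tau$ (resp. $\vartheta_\tau$) belong to $\mathcal{P}^{(\tau)}$ (resp. $\mathcal{F}_\tau$).

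The two elementary facts I would isolate first concern an arbitrary stopping time $\rho$: for every $t \ge 0$ one has $\{\rho \le t\} \in \mathcal{F}_\rho$, and for $A \in \mathcal{F}_s$ one has $A \cap \{s < \rho\} \in \mathcal{F}_\rho$. Both follow directly from the definition of $\mathcal{F}_\rho$ by intersecting with $\{\rho \le r\}$ and distinguishing the cases $r \le s$ and $r > s$. Applying these with $\rho = \tau$ immediately settles part (2): indeed $\vartheta_\tau^{-1}(A \times (s,t]) = A \cap \{s < \tau\} \cap \{\tau \le t\} \in \mathcal{F}_\tau$, while $\vartheta_\tau^{-1}(A_0 \times \{0\}) = A_0 \cap \{\tau = 0\} \in \mathcal{F}_0 \subseteq \mathcal{F}_\tau$.

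For part (1), I would compute $\theta_\tau^{-1}(A \times (s,t]) = \{(\omega,u) : \omega \in A, \ s < \tau(\omega) + u \le t\}$ and recognize its indicator as the process $Z_u(\omega) := \mathbbm{1}_A(\omega)\, \mathbbm{1}_{(s,t]}(\tau(\omega)+u)$. For each fixed $\omega$ the path $u \mapsto Z_u(\omega)$ is the indicator of the left-open, right-closed interval $(s-\tau(\omega),\, t-\tau(\omega)]$ intersected with $[0,\infty)$, and is therefore left-continuous. Moreover $Z$ is $\mathbb{F}^{(\tau)}$-adapted: writing $Z_u = \mathbbm{1}_{A \cap \{s < \tau + u\} \cap \{\tau + u \le t\}}$ and applying the two facts above with the stopping time $\rho = \tau + u$ shows that $Z_u$ is $\mathcal{F}_{\tau+u} = \mathcal{F}_u^{(\tau)}$-measurable. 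Being left-continuous and $\mathbb{F}^{(\tau)}$-adapted, $Z$ is $\mathbb{F}^{(\tau)}$-predictable, so the set lies in $\mathcal{P}^{(\tau)}$; the generator $A_0 \times \{0\}$ is handled directly, since $\theta_\tau^{-1}(A_0 \times \{0\}) = (A_0 \cap \{\tau = 0\}) \times \{0\}$ with $A_0 \cap \{\tau = 0\} \in \mathcal{F}_0^{(\tau)}$.

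The main obstacle I anticipate is precisely the adaptedness verification in part (1): one must transfer the $\omega$-dependence of $\tau$ through the $\mathcal{F}_s$-measurability of $A$, which is exactly what the fact $A \cap \{s < \tau + u\} \in \mathcal{F}_{\tau+u}$ accomplishes. Once that is in place, the left-continuity observation converts adaptedness into predictability without any delicate handling of the time-boundary $u = 0$. Alternatively, part (2) could be deduced from part (1) by writing $\vartheta_\tau = \theta_\tau \circ \iota_0$ with $\iota_0(\omega) = (\omega,0)$ and noting that $\iota_0$ is $\mathcal{F}_\tau$–$\mathcal{P}^{(\tau)}$–measurable, but the direct check is shorter.
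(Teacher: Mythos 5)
Your proof is correct, but for part (1) it takes a genuinely different route from the paper. For part (2) you do exactly what the paper does: pull back the rectangle generators $A \times (s,t]$, $A \in \mathcal{F}_s$, and $A_0 \times \{0\}$, $A_0 \in \mathcal{F}_0$, and check membership in $\mathcal{F}_\tau$ (you additionally spell out why $A \cap \{s<\tau\} \cap \{\tau \le t\} \in \mathcal{F}_\tau$, which the paper leaves implicit). For part (1), however, the paper uses the \emph{other} generating system from \cite[Thm. I.2.2]{Jacod-Shiryaev}, namely the stochastic intervals $[\![ 0,\varrho ]\!]$ for stopping times $\varrho$ together with $A \times \{0\}$, $A \in \mathcal{F}_0$; it computes
\begin{align*}
\theta_{\tau}^{-1}([\![ 0,\varrho ]\!]) = [\![ 0,(\varrho-\tau)^+ ]\!] \setminus (\{\tau > \varrho\} \times \{0\}),
\end{align*}
which lies in $\mathcal{P}^{(\tau)}$ because $(\varrho-\tau)^+$ is an $\mathbb{F}^{(\tau)}$-stopping time by Lemma \ref{lemma-F-stopping-time}. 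You instead keep the rectangle generators, view the indicator of $\theta_{\tau}^{-1}(A \times (s,t])$ as the process $Z_u = \mathbbm{1}_{A \cap \{s < \tau+u\} \cap \{\tau + u \le t\}}$, verify $\mathbb{F}^{(\tau)}$-adaptedness via the elementary facts $\{\rho \le t\} \in \mathcal{F}_\rho$ and $A \cap \{s < \rho\} \in \mathcal{F}_\rho$ applied to $\rho = \tau + u$, and invoke the standard fact that left-continuous adapted processes are predictable. Both arguments are sound: the paper's is shorter and reuses its Lemma \ref{lemma-F-stopping-time} (making that lemma do double duty), turning the preimage directly into a stochastic interval; yours unifies the two parts under a single generating system and avoids Lemma \ref{lemma-F-stopping-time} entirely, at the cost of importing the left-continuity criterion for predictability and a careful adaptedness check. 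Your closing observation that $\vartheta_\tau = \theta_\tau \circ \iota_0$ reduces part (2) to part (1) is also valid and is a connection the paper does not make.
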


\begin{proof}
According to \cite[Thm. I.2.2]{Jacod-Shiryaev}, the system of sets
\begin{align*}
\{ A \times \{ 0 \} : A \in \mathcal{F}_0 \} \cup \{ [\![ 0, \varrho ]\!] : \varrho \text{ is a stopping time} \}
\end{align*}
is a generating system of the predictable $\sigma$-algebra $\mathcal{P}$. For any set $A \in \mathcal{F}_0$ we have
\begin{align*}
\theta_{\tau}^{-1}(A \times \{ 0 \}) = (A \cap \{ \tau = 0\}) \times \{ 0 \} \in \mathcal{P}^{(\tau)}. 
\end{align*}
Furthermore, for any $\mathbb{F}$-stopping time $\varrho$ we have
\begin{align*}
\theta_{\tau}^{-1} ( [\![ 0,\varrho ]\!] ) &= \theta_{\tau}^{-1}( \{ (\omega,t) \in \Omega \times \mathbb{R}_+ : 0 \leq t \leq \varrho(\omega) \} )
\\ &= \{ (\omega,t) \in \Omega \times \mathbb{R}_+ : 0 \leq \tau(\omega) + t \leq \varrho(\omega) \}
\\ &= \{ (\omega,t) \in \Omega \times \mathbb{R}_+ : 0 \leq t \leq \varrho(\omega) - \tau(\omega) \}
\\ &= [\![ 0,\varrho-\tau ]\!] = [\![ 0,(\varrho-\tau)^+ ]\!] \setminus ( \{ \tau > \varrho \} \times \{ 0 \} ) \in \mathcal{P}^{(\tau)},
\end{align*}
where, in the last step, we have used Lemma \ref{lemma-F-stopping-time}. This proves the first statement.

According to \cite[Thm. I.2.2]{Jacod-Shiryaev}, the system of sets
\begin{align*}
\{ A \times \{ 0 \} : A \in \mathcal{F}_0 \} \cup \{ A \times (s,t] : s < t \text{ and } A \in \mathcal{F}_s \}
\end{align*}
is a generating system of the predictable $\sigma$-algebra $\mathcal{P}$. For any set $A \in \mathcal{F}_0$ we have
\begin{align*}
\vartheta_{\tau}^{-1}(A \times \{ 0 \}) = A \cap \{ \tau = 0\} \in \mathcal{F}_0 \subset \mathcal{F}_{\tau}. 
\end{align*}
Furthermore, for all $s,t \in \mathbb{R}_+$ with $s < t$ and $A \in \mathcal{F}_s$ we have
\begin{align*}
\vartheta_{\tau}^{-1}(A \times (s,t]) = A \cap \{ s < \tau \} \cap \{ \tau \leq t \} \in \mathcal{F}_{\tau},
\end{align*}
establishing the second statement.
\end{proof}

Let us further investigate the Poisson random measure $\mu$. According to \cite[Prop. II.1.14]{Jacod-Shiryaev}, there exist a sequence $(\kappa_n)_{n \in \mathbb{N}}$ of finite stopping times with
$[\![ \kappa_n ]\!] \cap [\![ \kappa_m ]\!] = \emptyset$ for $n \neq m$ and an $E$-valued optional process $\xi$ such that for every optional process $\gamma : \Omega \times \mathbb{R}_+ \times E \rightarrow H$, where $H$ denotes a separable Hilbert space, and all $0 \leq t \leq u$ with
\begin{align*}
\mathbb{P} \bigg( \int_t^u \int_E \| \gamma(s,x) \| \mu(ds,dx) < \infty \bigg) = 1
\end{align*}
we have
\begin{align}\label{integrate-mu}
\int_t^u \int_E \gamma(s,x) \mu(ds,dx) = \sum_{n \in \mathbb{N}} \gamma(\kappa_n,\xi_{\kappa_n}) \mathbbm{1}_{\{ t < \kappa_n \leq u \}}.
\end{align}
Let $B \in \mathcal{E}$ be a set with $F(B^c) < \infty$.
We define the mappings $\varrho_k : \Omega \rightarrow \overline{\mathbb{R}}_+$, $k \in \mathbb{N}_0$ as
\begin{align*}
\varrho_k := \inf \{ t \geq 0 : \mu([0,t] \times B^c) = k \}, \quad k \in \mathbb{N}_0.
\end{align*}

\begin{lemma}\label{lemma-rho}
The following statements are true:
\begin{enumerate}
\item For each $k \in \mathbb{N}$ the mapping $\varrho_k$ is a finite stopping time.

\item We have $\varrho_0 = 0$ and $\mathbb{P}(\varrho_k < \varrho_{k+1}) = 1$ for all $k \in \mathbb{N}_0$.

\item We have $\mathbb{P}(\varrho_k \rightarrow \infty) = 1$.
\end{enumerate}
\end{lemma}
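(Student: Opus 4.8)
The plan is to identify the counting process $N_t := \mu([0,t] \times B^c)$, $t \geq 0$, as a homogeneous Poisson process with intensity $\lambda := F(B^c)$, and then to read off all three assertions from standard properties of Poisson processes. Throughout I may assume $\lambda > 0$, since if $F(B^c) = 0$ then $N \equiv 0$ almost surely and the $\varrho_k$ for $k \geq 1$ degenerate; this borderline case should be flagged explicitly, as the finiteness claim in (1) and the strict inequality in (2) only make sense when $\lambda>0$.

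First I would establish the Poisson-process structure of $N$. From the definition of a time-homogeneous Poisson random measure with compensator $dt \otimes F(dx)$, the increments $N_t - N_s = \mu((s,t] \times B^c)$ are Poisson distributed with parameter $(t-s)\lambda$, are independent over disjoint time intervals, and the process $N$ is $\mathbb{F}$-adapted, integer-valued, nondecreasing and right-continuous with $N_0 = 0$ (the latter because $\mu(\{0\} \times B^c)$ has vanishing intensity, hence is null). The only delicate point is that $N$ has unit jumps almost surely, i.e. $\mu$ places no two points at the same time in $B^c$; this is the characterizing property of a rate-$\lambda$ Poisson process and can be obtained from the estimate that the probability of two or more points of $\mu$ in $(s,s+h] \times B^c$ is $O(h^2)$. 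It is precisely this unit-jump property that is needed in the remaining steps.

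For Part (1), right-continuity of $N$ together with the unit-jump property yields the identity $\{ \varrho_k \leq t \} = \{ N_t \geq k \} \in \mathcal{F}_t$ (without unit jumps $N$ could skip the value $k$, making $\{t : N_t = k\}$ empty), so each $\varrho_k$ is an $\mathbb{F}$-stopping time, the usual conditions being available. Finiteness follows from $N_t \to \infty$ almost surely as $t \to \infty$, which holds since $N$ is nondecreasing with $\mathbb{E}[N_t] = \lambda t \to \infty$ (equivalently $N_t/t \to \lambda > 0$ by the strong law). For Part (2), $\varrho_0 = 0$ is immediate from $N_0 = 0$, while $\mathbb{P}(\varrho_k < \varrho_{k+1}) = 1$ is exactly the statement that consecutive jump times of $N$ are strictly separated; this is the unit-jump property again, or equivalently the fact that the holding time $\varrho_{k+1}-\varrho_k$ is $\mathrm{Exp}(\lambda)$-distributed and hence strictly positive almost surely, by the independent-increments property. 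For Part (3), I would argue by contradiction: on the event $\{ \sup_k \varrho_k =: M < \infty \}$ one would have $N_M \geq k$ for every $k$, i.e. $N_M = \infty$, contradicting $N_M \sim \mathrm{Poisson}(\lambda M) < \infty$ almost surely; therefore $\varrho_k \uparrow \infty$ almost surely.

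The main obstacle I anticipate is not any single computation but rather pinning down the unit-jump property of $N$ cleanly from the hypotheses, since both the stopping-time identity in Part (1) and the strict inequality in Part (2) hinge on it. Once $N$ is recognized as a genuine rate-$\lambda$ Poisson process, the three conclusions are routine consequences of well-known Poisson process theory.
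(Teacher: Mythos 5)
Your proof is correct, but it takes a genuinely different route from the paper, which disposes of this lemma in a single line by citing an external result (Lemma A.19 of \cite{Manifolds}) and gives no argument at all. You instead reconstruct the underlying argument from scratch: identify $N_t = \mu([0,t]\times B^c)$ as a rate-$\lambda$ Poisson process with $\lambda = F(B^c)$, isolate the unit-jump property as the key point, and deduce (1) from the identity $\{\varrho_k \le t\} = \{N_t \ge k\}$ together with $N_t \to \infty$, (2) from strict separation of consecutive jump times, and (3) from the a.s.\ finiteness of $N$ at finite times. This is sound, and your invocation of the usual conditions is exactly right, since the identity $\{\varrho_k \le t\} = \{N_t \ge k\}$ only holds up to a null set. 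Your flag of the degenerate case is a real catch rather than pedantry: if $F(B^c)=0$ then $\varrho_k \equiv \infty$ for all $k \ge 1$, so the finiteness in (1) and the strict inequality in (2) are simply false; the lemma implicitly assumes $F(B^c)>0$, and your self-contained proof makes visible a hypothesis that the paper's citation hides. One cosmetic repair in part (3): writing $N_M \sim \mathrm{Poisson}(\lambda M)$ is an abuse because $M = \sup_k \varrho_k$ is random; the clean statement is that almost surely $N_n < \infty$ for every integer $n$ (each $N_n$ being Poisson distributed with parameter $\lambda n$), so on $\{M < \infty\}$ one would have $N_n = \infty$ for any integer $n > M$, an event of probability zero. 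With that rephrasing your argument is complete. What the paper's approach buys is brevity; what yours buys is independence from an external preprint and an explicit accounting of where $0 < F(B^c) < \infty$, right-continuity, and the unit-jump property each enter.
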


\begin{proof}
This follows from \cite[Lemma A.19]{Manifolds}.
\end{proof}

\section{Existence and uniqueness of strong solutions to Hilbert space valued SDEs}\label{sec-SDE}

In this section, we establish existence and uniqueness of (local) strong solutions to Hilbert space valued SDEs of the type (\ref{SDE}).

Let $\mathcal{H}$ be a separable Hilbert space and let $B \in \mathcal{E}$ be a set with $F(B^c) < \infty$. Furthermore, let $a : \Omega \times \mathbb{R}_+ \times \mathcal{H} \rightarrow \mathcal{H}$ and $b : \Omega \times \mathbb{R}_+ \times \mathcal{H} \rightarrow L_2^0(\mathcal{H})$ be $\mathcal{P} \otimes \mathcal{B}(\mathcal{H})$-measurable mappings, and let $c : \Omega \times \mathbb{R}_+ \times \mathcal{H} \times E \rightarrow \mathcal{H}$ be a $\mathcal{P} \otimes \mathcal{B}(\mathcal{H}) \otimes \mathcal{E}$-measurable mapping.

\begin{definition}
We say that \emph{existence} of (local) strong solutions to (\ref{SDE}) holds, if for each $\mathcal{F}_0$-measurable random variable $y_0 : \Omega \rightarrow \mathcal{H}$ there exists a (local) strong solution to (\ref{SDE}) with initial condition $y_0$ (and some strictly positive lifetime $\tau > 0$). 
\end{definition}

\begin{definition}
We say that \emph{uniqueness} of (local) strong solutions to (\ref{SDE}) holds, if for two (local) strong solutions to (\ref{SDE}) with initial conditions $y_0$ and $y_0'$ (and lifetimes $\tau$ and $\tau'$) we have up to indistinguishability
\begin{align*}
Y \mathbbm{1}_{\{ y_0 = y_0' \}} &= Y' \mathbbm{1}_{\{ y_0 = y_0' \}}
\\ \big( Y^{\tau \wedge \tau'} \mathbbm{1}_{\{ y_0 = y_0' \}} &= (Y')^{\tau \wedge \tau'} \mathbbm{1}_{\{ y_0 = y_0' \}} \big).
\end{align*}
\end{definition}

Note that uniqueness of local strong solutions to (\ref{SDE}) implies uniqueness of strong solutions to (\ref{SDE}). This is seen by setting $\tau := \infty$ and $\tau' := \infty$.

\begin{definition}
We say that the mappings $(a,b,c|_B)$ are \emph{locally Lipschitz}, if $\mathbb{P}$--almost surely
\begin{align*}
\bigg( \int_B \| c(t,y,x) \|^2 F(dx) \bigg)^{1/2} < \infty \quad \text{for all $t \in \mathbb{R}_+$ and all $y \in \mathcal{H}$,}
\end{align*}
and for each $n \in \mathbb{N}$ there is a non-decreasing function $L_n : \mathbb{R}_+ \rightarrow \mathbb{R}_+$ such that $\mathbb{P}$--almost surely
\begin{align}\label{loc-Lipschitz-a}
\| a(t,y_1) - a(t,y_2) \| &\leq L_n(t) \| y_1 - y_2 \|,
\\ \label{loc-Lipschitz-b} \| b(t,y_1) - b(t,y_2) \|_{L_2^0(\mathcal{H})} &\leq L_n(t) \| y_1 - y_2 \|,
\\ \label{loc-Lipschitz-c} \bigg( \int_B \| c(t,y_1,x) - c(t,y_2,x) \|^2 F(dx) \bigg)^{1/2} &\leq L_n(t) \| y_1 - y_2 \|
\end{align}
for all $t \in \mathbb{R}_+$ and all $y_1,y_2 \in \mathcal{H}$ with $\| y_1 \|, \| y_2 \| \leq n$.
\end{definition}

\begin{definition}
We say that the mappings $(a,b,c|_B)$ satisfy the \emph{linear growth condition}, if there exists a non-decreasing function $K : \mathbb{R}_+ \rightarrow \mathbb{R}_+$ such that $\mathbb{P}$--almost surely
\begin{align}\label{lin-growth-a}
\| a(t,y) \| &\leq K(t) ( 1 + \| y \|),
\\ \label{lin-growth-b} \| b(t,y) \|_{L_2^0(H)} &\leq K(t) ( 1 + \| y \|),
\\ \label{lin-growth-c} \bigg( \int_B \| c(t,y,x) \|^2 F(dx) \bigg)^{1/2} &\leq K(t) ( 1 + \| y \|)
\end{align}
for all $t \in \mathbb{R}_+$ and all $y \in \mathcal{H}$.
\end{definition}

\begin{definition}
We say that the mappings $(a,b,c|_B)$ are \emph{locally bounded}, if for each $n \in \mathbb{N}$ there is a non-decreasing function $M_n : \mathbb{R}_+ \rightarrow \mathbb{R}_+$ such that $\mathbb{P}$--almost surely
\begin{align*}
\| a(t,y) \| &\leq M_n(t),
\\ \| b(t,y) \|_{L_2^0(H)} &\leq M_n(t),
\\ \bigg( \int_B \| c(t,y,x) \|^2 F(dx) \bigg)^{1/2} &\leq M_n(t)
\end{align*}
for all $t \in \mathbb{R}_+$ and all $y \in \mathcal{H}$ with $\| y \| \leq n$.
\end{definition}

For a finite stopping time $\tau$ and a set $\Gamma \in \mathcal{F}_{\tau}$ we define the mappings $a^{(\tau,\Gamma)} : \Omega \times \mathbb{R}_+ \times \mathcal{H} \rightarrow \mathcal{H}$, $b^{(\tau,\Gamma)} : \Omega \times \mathbb{R}_+ \times \mathcal{H} \rightarrow L_2^0(\mathcal{H})$ and $c^{(\tau,\Gamma)} : \Omega \times \mathbb{R}_+ \times \mathcal{H} \times E \rightarrow \mathcal{H}$ as
\begin{align}\label{def-a-tau}
a^{(\tau,\Gamma)}(t,y) &:= a(\tau+t,y) \mathbbm{1}_{\Gamma},
\\ \label{def-b-tau} b^{(\tau,\Gamma)}(t,y) &:= b(\tau+t,y) \mathbbm{1}_{\Gamma},
\\ \label{def-c-tau} c^{(\tau,\Gamma)}(t,y,x) &:= c(\tau+t,y,x) \mathbbm{1}_{\Gamma}.
\end{align}
By Lemma \ref{lemma-pred-meas}, the mappings $a^{(\tau,\Gamma)}$ and $b^{(\tau,\Gamma)}$ are $\mathcal{P}^{(\tau)} \otimes \mathcal{B}(\mathcal{H})$-measurable, and $c^{(\tau,\Gamma)}$ is $\mathcal{P}^{(\tau)} \otimes \mathcal{B}(\mathcal{H}) \otimes \mathcal{E}$-measurable.We shall also use the notation
\begin{align}\label{def-abc}
a^{(\tau)} := a^{(\tau,\Omega)}, \quad b^{(\tau)} := b^{(\tau,\Omega)} \quad \text{and} \quad c^{(\tau)} := c^{(\tau,\Omega)}.
\end{align}

\begin{lemma}\label{lemma-Lipschitz-trans}
Suppose that $\tau \mathbbm{1}_{\Gamma}$ is bounded. Then, the following statements are true:
\begin{enumerate}
\item If $(a,b,c|_B)$ are locally Lipschitz, then $(a^{(\tau,\Gamma)},b^{(\tau,\Gamma)},c^{(\tau,\Gamma)}|_B)$ are locally Lipschitz, too.

\item If $(a,b,c|_B)$ satisfy the linear growth condition, then $(a^{(\tau,\Gamma)},b^{(\tau,\Gamma)},c^{(\tau,\Gamma)}|_B)$ satisfy the linear growth condition, too.
\end{enumerate}
\end{lemma}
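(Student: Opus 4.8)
The plan is to prove the two statements of Lemma \ref{lemma-Lipschitz-trans} by transferring the regularity estimates from $(a,b,c|_B)$ to the shifted mappings $(a^{(\tau,\Gamma)},b^{(\tau,\Gamma)},c^{(\tau,\Gamma)})$ defined in (\ref{def-a-tau})--(\ref{def-c-tau}). The central observation is purely pointwise: for each fixed $\omega \in \Omega$ and $t \in \mathbb{R}_+$, the value $a^{(\tau,\Gamma)}(\omega,t,y)$ is either $a(\omega,\tau(\omega)+t,y)$ (when $\omega \in \Gamma$) or $0$ (when $\omega \notin \Gamma$), and likewise for $b$ and $c$. Since the indicator $\mathbbm{1}_\Gamma$ does not depend on $y$, $y_1$, or $y_2$, it factors out of every Lipschitz difference and every growth bound, and on $\Gamma^c$ all the relevant quantities vanish identically, so the inequalities hold trivially there. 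Thus the only genuine content is to produce, from the given controlling functions, new controlling functions adapted to the time-shift $t \mapsto \tau+t$.

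For statement (1), I would fix $n \in \mathbb{N}$, take the non-decreasing function $L_n$ furnished by the local Lipschitz property of $(a,b,c|_B)$, and define the candidate controlling function by composing with the shift while absorbing the stopping time. Using the hypothesis that $\tau \mathbbm{1}_\Gamma$ is bounded, say by a constant $C \geq 0$, I would set $\tilde L_n(t) := L_n(C + t)$, which is again non-decreasing. Then on $\Gamma$ one has, for $\|y_1\|,\|y_2\| \leq n$,
\begin{align*}
\| a^{(\tau,\Gamma)}(t,y_1) - a^{(\tau,\Gamma)}(t,y_2) \| = \| a(\tau+t,y_1) - a(\tau+t,y_2) \| \leq L_n(\tau+t) \| y_1 - y_2 \| \leq \tilde L_n(t) \| y_1 - y_2 \|,
\end{align*}
where the last inequality uses $\tau \leq C$ on $\Gamma$ together with monotonicity of $L_n$; on $\Gamma^c$ the left-hand side is zero. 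The same reasoning applied to (\ref{loc-Lipschitz-b}) and (\ref{loc-Lipschitz-c}) yields the corresponding bounds, and the required finiteness of $\big(\int_B \|c^{(\tau,\Gamma)}(t,y,x)\|^2 F(dx)\big)^{1/2}$ follows immediately from the finiteness assumed for $c|_B$, since the integrand is either the original one or zero. Statement (2) is handled identically: with $K$ the linear-growth function for $(a,b,c|_B)$, the function $\tilde K(t) := K(C+t)$ is non-decreasing and controls the shifted mappings by the same monotonicity argument.

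There is no substantial obstacle here; the lemma is essentially bookkeeping, and its only real input is the measurability already recorded after (\ref{def-c-tau}) via Lemma \ref{lemma-pred-meas}, which guarantees the shifted mappings are legitimate coefficients. The one point demanding a little care is the exceptional $\mathbb{P}$-null set: the estimates (\ref{loc-Lipschitz-a})--(\ref{loc-Lipschitz-c}) and (\ref{lin-growth-a})--(\ref{lin-growth-c}) are assumed only $\mathbb{P}$-almost surely, so I would note that the $\omega$-dependence of $\tau$ is benign because the shift $t \mapsto \tau(\omega)+t$ merely reindexes the time axis, and the almost-sure exceptional set for the shifted mappings can be taken to be the same null set (or its preimage under the shift, which is again null since the shift acts only on the time coordinate). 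In short, the proof reduces to verifying that boundedness of $\tau \mathbbm{1}_\Gamma$ lets monotonicity of $L_n$ and $K$ swallow the time-shift, and that the indicator $\mathbbm{1}_\Gamma$ is harmless because it is independent of the spatial variables.
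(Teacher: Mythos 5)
Your proof is correct and follows essentially the same route as the paper: both arguments absorb the time shift into the controlling function by setting $\tilde{L}_n(t) := L_n(C+t)$ (resp. $\tilde{K}(t) := K(C+t)$), using the bound on $\tau \mathbbm{1}_{\Gamma}$ together with monotonicity, and noting that the indicator $\mathbbm{1}_{\Gamma}$ factors out harmlessly. The paper writes out only the linear growth case and declares the Lipschitz case analogous, whereas you spell out the Lipschitz case; the content is identical.
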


\begin{proof}
Suppose that $(a,b,c|_B)$ satisfy the linear growth condition. Since $\tau \mathbbm{1}_{\Gamma}$ is bounded, there exists a constant $T \geq 0$ such that $\tau \mathbbm{1}_{\Gamma} \leq T$. The mapping $\tilde{K} := K(\bullet + T) : \mathbb{R}_+ \rightarrow \mathbb{R}_+$ is non-decreasing, and we have $\mathbb{P}$--almost surely
\begin{align*}
\| a^{(\tau,\Gamma)}(t,y) \| = \| a(t + \tau,y) \mathbbm{1}_{\Gamma} \| \leq K(t+\tau) \mathbbm{1}_{\Gamma} (1+\| y \|) \leq \tilde{K}(t) (1+\| y \|)
\end{align*}
for all $t \in \mathbb{R}_+$ and $y \in \mathcal{H}$.
Analogous estimates for $b^{(\tau,\Gamma)}$ and $c^{(\tau,\Gamma)}$ prove that $(a^{(\tau,\Gamma)},b^{(\tau,\Gamma)},c^{(\tau,\Gamma)}|_B)$ satisfy the linear growth condition, too. The remaining statement is proven analogously.
\end{proof}

\begin{lemma}\label{lemma-solution-tau-1}
Let $\tau$ and $\varrho$ be two finite stopping times and let $\Gamma \in \mathcal{F}_{\tau}$ be a set with $\Gamma \subset \{ \tau \leq \varrho \}$. If $Y$ is a $\mathbb{F}$-adapted local strong solution to (\ref{SDE}) with lifetime $\varrho$, then 
\begin{align}\label{def-Y-tau}
Y^{(\tau, \Gamma)} := Y_{\tau + \bullet} \mathbbm{1}_{\Gamma}
\end{align}
is a $\mathbb{F}^{(\tau)}$-adapted local strong solution to (\ref{SDE}) with parameters 
\begin{align}\label{para-pre}
a = a^{(\tau,\Gamma)}, \, b = b^{(\tau,\Gamma)}, \, c = c^{(\tau,\Gamma)}, \, W = W^{(\tau)}, \, \mu = \mu^{(\tau)},
\end{align}
initial condition $Y_{\tau} \mathbbm{1}_{\Gamma}$, and lifetime $(\varrho - \tau)^+$.
\end{lemma}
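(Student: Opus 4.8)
The plan is to show that the shifted and restricted process $Y^{(\tau,\Gamma)} = Y_{\tau+\bullet}\mathbbm{1}_\Gamma$ satisfies the integral equation defining a local strong solution to (\ref{SDE}) with the data listed in (\ref{para-pre}), initial value $Y_\tau\mathbbm{1}_\Gamma$ and lifetime $(\varrho-\tau)^+$, by performing a path-wise time-shift $s \mapsto \tau+s$ in each of the four integral terms and then absorbing the factor $\mathbbm{1}_\Gamma$ into the coefficients. Before the computation I would record the structural facts. Since $Y$ is $\mathbb{F}$-adapted and $\tau$ is a finite stopping time, $Y_{\tau+t}$ is $\mathcal{F}_{\tau+t} = \mathcal{F}_t^{(\tau)}$-measurable, and $\Gamma \in \mathcal{F}_\tau \subset \mathcal{F}_t^{(\tau)}$, so $Y^{(\tau,\Gamma)}_t$ is $\mathcal{F}_t^{(\tau)}$-measurable; that $(\varrho-\tau)^+$ is an $\mathbb{F}^{(\tau)}$-stopping time is Lemma \ref{lemma-F-stopping-time}; and the initial value is $Y^{(\tau,\Gamma)}_0 = Y_\tau\mathbbm{1}_\Gamma$. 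Finally, since $\Gamma \subset \{\tau \leq \varrho\}$, on $\Gamma$ the stopped time obeys $(\tau+t)\wedge\varrho = \tau + \big(t\wedge(\varrho-\tau)^+\big)$, which is what makes the shift and the lifetime match up.

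For the core step I would start from the integral equation satisfied by the stopped process $Y^\varrho$, evaluate it at time $\tau + \big(t\wedge(\varrho-\tau)^+\big)$, subtract its value at time $\tau$, and multiply by $\mathbbm{1}_\Gamma$. The drift term and the path-wise finite-variation compensator term $\int_B \cdots F(dx)\,ds$ are handled by the deterministic change of variables $s = \tau+u$, which turns $\int_\tau^{\tau+r} a(s,Y_s)\,ds$ into $\int_0^r a^{(\tau)}(u,Y_{\tau+u})\,du$, and analogously for the compensator. For the $W$-integral and the compensated $\mu$-integral I would invoke the time-shift identities $\int_\tau^{\tau+r} b(s,Y_s)\,dW_s = \int_0^r b^{(\tau)}(u,Y_{\tau+u})\,dW^{(\tau)}_u$ and the corresponding identity against $\mu^{(\tau)} - F\,du$ on $B$; their validity rests on the shifted integrands $b(\tau+\bullet,Y_{\tau+\bullet})$ and $c(\tau+\bullet,Y_{(\tau+\bullet)-},\bullet)$ being $\mathbb{F}^{(\tau)}$-predictable, which is exactly what the measurability of the map $\theta_\tau$ in Lemma \ref{lemma-pred-meas} provides. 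For the large-jump term I would use the representation (\ref{integrate-mu}) together with the definition of $\mu^{(\tau)}$: the sum over jump times in $(\tau,\tau+r]$ against $\mu$ is precisely the sum over $(0,r]$ against $\mu^{(\tau)}$.

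It then remains to fold in the indicator. Because $\Gamma \in \mathcal{F}_\tau = \mathcal{F}_0^{(\tau)}$, the random variable $\mathbbm{1}_\Gamma$ is constant in time and $\mathbb{F}^{(\tau)}$-predictable, so it may be pulled inside each shifted integral, converting $(a^{(\tau)},b^{(\tau)},c^{(\tau)})$ into $(a^{(\tau,\Gamma)},b^{(\tau,\Gamma)},c^{(\tau,\Gamma)})$. Moreover $Y_{\tau+u}\mathbbm{1}_\Gamma = Y^{(\tau,\Gamma)}_u$, and the coefficients evaluated at $Y_{\tau+u}$ or at $Y^{(\tau,\Gamma)}_u$ agree on $\Gamma$ while both integrands vanish off $\Gamma$; hence the integrands become the coefficients of (\ref{SDE}) evaluated along $Y^{(\tau,\Gamma)}$. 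Assembling the four terms yields exactly the integral equation characterising $Y^{(\tau,\Gamma)}$ as a local strong solution with the stated parameters and lifetime.

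The main obstacle is the rigorous justification of the two stochastic time-shift identities together with the measurability of the shifted integrands. This is precisely the role of Lemma \ref{lemma-pred-meas}: composition with $\theta_\tau$ carries $\mathcal{P}$-measurable integrands to $\mathcal{P}^{(\tau)}$-measurable ones, after which the identities are obtained by first verifying them on elementary integrands and extending by the usual approximation argument (or by citing the companion references). A secondary point requiring care is that the optional stopping of $Y$ at $\varrho$ transfers correctly to stopping the shifted integrals at $(\varrho-\tau)^+$, which is guaranteed by the identity $(\tau+t)\wedge\varrho = \tau + \big(t\wedge(\varrho-\tau)^+\big)$ on $\Gamma$ noted above.
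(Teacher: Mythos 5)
Your proposal is correct and follows essentially the same route as the paper's proof: a path-wise time shift $s \mapsto \tau + s$ in each of the four integral terms, identification of the shifted drivers $W^{(\tau)}$ and $\mu^{(\tau)}$, and absorption of $\mathbbm{1}_{\Gamma}$ into the coefficients to produce $(a^{(\tau,\Gamma)}, b^{(\tau,\Gamma)}, c^{(\tau,\Gamma)})$, with the lifetime handled via $(\varrho-\tau)^+$ from Lemma \ref{lemma-F-stopping-time}. The only difference is explicitness: the paper writes the same chain of equalities but leaves the stochastic time-shift identities and the $\mathcal{P}^{(\tau)}$-measurability of the shifted integrands implicit, whereas you flag these as the key technical points and route them through Lemma \ref{lemma-pred-meas}.
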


\begin{proof}
The process $Y^{(\tau,\Gamma)}$ given by (\ref{def-Y-tau}) is $\mathbb{F}^{(\tau)}$-adapted, and we have
\begin{align*}
Y_t^{(\tau,\Gamma)} \mathbbm{1}_{[\![ 0,(\varrho-\tau)^+ ]\!]}(t) &= Y_{\tau + t} \mathbbm{1}_{\Gamma} \mathbbm{1}_{[\![ 0,(\varrho-\tau)^+ ]\!]}(t) = [ Y_{\tau} + (Y_{\tau + t} - Y_{\tau}) ] \mathbbm{1}_{\Gamma} \mathbbm{1}_{[\![ 0,(\varrho-\tau)^+ ]\!]}(t)
\\ &= \bigg[ Y_{\tau} + \int_{\tau}^{\tau + t} a(s,Y_s) ds + \int_{\tau}^{\tau + t} b(s,Y_s) dW_s 
\\ &\qquad + \int_{\tau}^{\tau + t} \int_B c(s,Y_{s-},x) (\mu(ds,dx) - F(dx)ds)
\\ &\qquad + \int_{\tau}^{\tau + t} \int_{B^c} c(s,Y_{s-},x) \mu(ds,dx) \bigg] \mathbbm{1}_{\Gamma} \mathbbm{1}_{[\![ 0,(\varrho-\tau)^+ ]\!]}(t).
\end{align*}
Therefore, we obtain
\begin{align*}
Y_t^{(\tau,\Gamma)}  \mathbbm{1}_{[\![ 0,(\varrho-\tau)^+ ]\!]}(t) &= \bigg[ Y_{\tau} \mathbbm{1}_{\Gamma} + \int_{0}^{t} a(\tau+s,Y_{\tau+s}) \mathbbm{1}_{\Gamma} ds + \int_{0}^{t} b(\tau+s,Y_{\tau+s}) \mathbbm{1}_{\Gamma} dW_s^{(\tau)}
\\ &\qquad + \int_{0}^{t} \int_B c(\tau+s,Y_{(\tau+s)-},x) \mathbbm{1}_{\Gamma} (\mu^{(\tau)}(ds,dx) - F(dx)ds) 
\\ &\qquad + \int_{0}^{t} \int_{B^c} c(\tau+s,Y_{(\tau+s)-},x) \mathbbm{1}_{\Gamma} \mu^{(\tau)}(ds,dx) \bigg] \mathbbm{1}_{[\![ 0,(\varrho-\tau)^+ ]\!]}(t).
\end{align*}
Taking into account the Definitions (\ref{def-a-tau})--(\ref{def-c-tau}) of $a^{(\tau,\Gamma)}$, $b^{(\tau,\Gamma)}$, $c^{(\tau,\Gamma)}$ and the Definition (\ref{def-Y-tau}) of $Y^{(\tau,\Gamma)}$, it follows that 
\begin{align*}
Y_t^{(\tau,\Gamma)} \mathbbm{1}_{[\![ 0,(\varrho-\tau)^+ ]\!]}(t) &= \bigg[ Y_{\tau} \mathbbm{1}_{\Gamma} + \int_{0}^{t} a^{(\tau,\Gamma)}(s,Y_s^{(\tau,\Gamma)}) ds + \int_{0}^{t} b^{(\tau,\Gamma)}(s,Y_s^{(\tau,\Gamma)}) dW_s^{(\tau)}
\\ &\qquad + \int_{0}^{t} \int_B c^{(\tau)}(s,Y_{s-}^{(\tau,\Gamma)},x) (\mu^{(\tau)}(ds,dx) - F(dx)ds) 
\\ &\qquad + \int_{0}^{t} \int_{B^c} c^{(\tau)}(s,Y_{s-}^{(\tau,\Gamma)},x) \mu^{(\tau)}(ds,dx) \bigg] \mathbbm{1}_{[\![ 0,(\varrho-\tau)^+ ]\!]}(t).
\end{align*}
Consequently, $Y^{(\tau,\Gamma)}$ is a local strong solution to (\ref{SDE}) with parameters (\ref{para-pre}),
initial condition $Y_{\tau} \mathbbm{1}_{\Gamma}$, and lifetime $(\varrho - \tau)^+$.
\end{proof}

\begin{lemma}\label{lemma-solution-tau-2}
Let $\tau \leq \varrho$ be two finite stopping times. If $Y^{(0)}$ is a $\mathbb{F}$-adapted local strong solution to (\ref{SDE}) with lifetime $\tau$, and $Y^{(\tau)}$ is a $\mathbb{F}^{(\tau)}$-adapted local strong solution to (\ref{SDE}) with parameters
\begin{align}\label{para-pre-tau}
a = a^{(\tau)}, \, b = b^{(\tau)}, \, c = c^{(\tau)}, \, W = W^{(\tau)}, \, \mu = \mu^{(\tau)},
\end{align}
initial condition $Y_{\tau}^{(0)}$, and lifetime $\varrho - \tau$, then 
\begin{align}\label{def-Y-converse}
Y := Y^{(0)} \mathbbm{1}_{[\![ 0, \tau ]\!]} + Y_{\bullet - \tau}^{(\tau)} \mathbbm{1}_{]\!] \tau,\varrho ]\!]}
\end{align}
is a $\mathbb{F}$-adapted local strong solution to (\ref{SDE}) with lifetime $\varrho$. 
\end{lemma}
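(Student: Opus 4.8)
The plan is to verify directly that the process $Y$ defined in (\ref{def-Y-converse}) is $\mathbb{F}$-adapted and satisfies the integral equation (\ref{SDE}) up to the lifetime $\varrho$. This is precisely the converse of Lemma \ref{lemma-solution-tau-1}, so the same change-of-clock identities for the driving objects will be used, now read in the opposite direction.

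I first treat adaptedness. On the event $\{ t \leq \tau \}$ we have $Y_t = Y_t^{(0)}$, which is $\mathcal{F}_t$-measurable because $Y^{(0)}$ is $\mathbb{F}$-adapted. On the event $\{ \tau < t \leq \varrho \}$ we have $Y_t = Y_{t-\tau}^{(\tau)}$; since $Y^{(\tau)}$ is $\mathbb{F}^{(\tau)}$-adapted and, being a solution, right-continuous and hence progressively measurable, evaluating it at the time $(t-\tau)^+$, which is an $\mathbb{F}^{(\tau)}$-stopping time by Lemma \ref{lemma-F-stopping-time}, together with the identity $\mathcal{F}_s^{(\tau)} = \mathcal{F}_{\tau+s}$ and the measurability of $\theta_{\tau}$ from Lemma \ref{lemma-pred-meas}, yields $\mathcal{F}_t$-measurability. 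Hence $Y$ is $\mathbb{F}$-adapted.

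To establish the integral equation I split the time axis at $\tau$. On $[\![ 0,\tau ]\!]$ we have $Y = Y^{(0)}$, and since $Y^{(0)}$ is a local strong solution of (\ref{SDE}) with lifetime $\tau$, the equation holds for $t \leq \tau$; in particular, evaluating at $\tau$ expresses $Y_{\tau} = Y_{\tau}^{(0)}$ in terms of $Y_0$ and the integrals $\int_0^{\tau}(\cdot)$. On $]\!] \tau, \varrho ]\!]$ I use that $Y^{(\tau)}$ solves (\ref{SDE}) with the shifted parameters (\ref{para-pre-tau}) and initial value $Y_{\tau}^{(0)}$: writing $t = \tau + r$ with $0 \leq r \leq \varrho - \tau$, its defining equation reads $Y_r^{(\tau)} = Y_{\tau}^{(0)} + \int_0^r a^{(\tau)}(s,Y_s^{(\tau)})\,ds + \ldots$. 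The decisive step is to push each of these integrals back to the original clock. For the drift, the substitution $s \mapsto \tau + s$, together with $a^{(\tau)}(s,\cdot) = a(\tau+s,\cdot)$ and $Y_s^{(\tau)} = Y_{\tau+s}$ on $]\!] \tau, \varrho ]\!]$, turns $\int_0^r a^{(\tau)}(s,Y_s^{(\tau)})\,ds$ into $\int_{\tau}^t a(u,Y_u)\,du$. For the term driven by $W^{(\tau)} = W_{\tau+\bullet} - W_{\tau}$ and for the two jump terms driven by $\mu^{(\tau)}$ one invokes the corresponding shift identities — the same ones computed in the proof of Lemma \ref{lemma-solution-tau-1} — whose validity rests on the $\mathcal{P}^{(\tau)}$--$\mathcal{P}$-measurability of $\theta_{\tau}$ from Lemma \ref{lemma-pred-meas}; this rewrites the stochastic integral as $\int_{\tau}^t b(u,Y_u)\,dW_u$ and the jump integrals as $\int_{\tau}^t \int_B c(u,Y_{u-},x)(\mu(du,dx) - F(dx)du)$ and $\int_{\tau}^t \int_{B^c} c(u,Y_{u-},x)\mu(du,dx)$. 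Thus, for $\tau < t \leq \varrho$, we obtain $Y_t = Y_{\tau} + \int_{\tau}^t a(u,Y_u)\,du + \ldots$.

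It remains to concatenate the two regimes. Since $Y = Y^{(0)}$ on $[\![ 0,\tau ]\!]$, the integrals $\int_0^{\tau}(\cdot)$ in the expression for $Y_{\tau}$ and the integrals $\int_{\tau}^t(\cdot)$ from the second regime combine into $\int_0^t(\cdot)$ by additivity of the ordinary, stochastic and Poisson integrals over adjacent intervals; substituting the expression for $Y_{\tau}$ then gives $Y_t = Y_0 + \int_0^t a(s,Y_s)\,ds + \ldots$ for every $t \leq \varrho$. Together with adaptedness, this shows that $Y$ is an $\mathbb{F}$-adapted local strong solution of (\ref{SDE}) with lifetime $\varrho$. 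I expect the main obstacle to be precisely these clock-shift identities for the stochastic and Poisson integrals, that is, making rigorous that integration against $W^{(\tau)}$ and $\mu^{(\tau)}$ over $[0,r]$ coincides with integration against $W$ and $\mu$ over $]\!] \tau, \tau+r ]\!]$ for the integrands at hand; rather than re-deriving them, I would extract them from the computation already carried out in Lemma \ref{lemma-solution-tau-1}, with the measurability statements of Lemma \ref{lemma-pred-meas} ensuring that the shifted integrands remain admissible.
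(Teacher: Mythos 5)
Your proposal is correct and follows essentially the same route as the paper: verify adaptedness via the $\mathbb{F}^{(\tau)}$-stopping time $(t-\tau)^+$ and the identity $\mathcal{F}^{(\tau)}_{(t-\tau)^+} = \mathcal{F}_{\tau+(t-\tau)^+}$, then verify the integral equation by writing out the solution property of $Y^{(\tau)}$ with the shifted parameters, undoing the clock shift to express the integrals against $W$ and $\mu$ over $]\!]\tau,t]\!]$, and concatenating with the equation satisfied by $Y^{(0)}$ on $[\![0,\tau]\!]$. The paper handles the adaptedness bookkeeping with an explicit exceptional set $D_C$ and otherwise performs exactly the substitutions you describe, so your plan matches its proof in both structure and detail.
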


\begin{proof}
Let $t \in \mathbb{R}_+$ be arbitrary. Then, the random variable $Y_t^{(0)} \mathbbm{1}_{\{ \tau \geq t \}}$ is $\mathcal{F}_t$-measurable. Let $C \in \mathcal{B}(\mathcal{H})$ be an arbitrary Borel set. We define $D_C \in \mathcal{F}_t$ as
\begin{align*}
D_C :=
\begin{cases}
(\{ \tau < t \} \cap \{ t \leq \varrho \})^c, & \text{if } 0 \in C,
\\ \emptyset, & \text{if } 0 \notin C.
\end{cases}
\end{align*}
According to Lemma \ref{lemma-F-stopping-time}, the mapping
$(t-\tau)^+$ is a $\mathbb{F}^{(\tau)}$-stopping time. Therefore, we get
\begin{align*}
\{ Y_{(t-\tau)^+}^{(\tau)} \in C \} \in \mathcal{F}_{(t-\tau)^+}^{(\tau)} = \mathcal{F}_{\tau + (t-\tau)^+},
\end{align*}
and hence, we obtain
\begin{align*}
&\{ Y_{t-\tau}^{(\tau)} \mathbbm{1}_{\{ \tau < t \leq \varrho \}} \in C \} = \{ Y_{(t-\tau)^+}^{(\tau)} \mathbbm{1}_{\{ \tau < t \leq \varrho \}} \in C \} 
\\ &= ( \{ \tau < t \} \cap \{ t \leq \varrho \} \cap \{ Y_{(t-\tau)^+}^{(\tau)} \in C \} ) \cup D_C
\\ &= ( \{ \varrho \geq t \}  \cap \{ \tau \neq t \} \cap \{ \tau \leq t \} \cap \{ Y_{(t-\tau)^+}^{(\tau)} \in C \} ) \cup D_C
\\ &= ( \{ \varrho \geq t \} \cap \{ \tau \neq t \} \cap \{ \tau + (t-\tau)^+ = t \} \cap \{ Y_{(t-\tau)^+}^{(\tau)} \in C \} ) \cup D_C \in \mathcal{F}_t,
\end{align*}
showing that the process $Y$ defined in (\ref{def-Y-converse}) is $\mathbb{F}$-adapted. Moreover, since $Y^{(\tau)}$ is local strong solution to (\ref{SDE}) with initial condition $Y_{\tau}^{(0)}$ and lifetime $\varrho - \tau$, we have
\begin{align*}
Y_{t-\tau}^{(\tau)} \mathbbm{1}_{]\!] \tau,\varrho ]\!]}(t) &= \bigg[ Y_{\tau}^{(0)} + \int_{0}^{t-\tau} a^{(\tau)}(s,Y_{s}^{(\tau)}) ds + \int_{0}^{t-\tau} b^{(\tau)}(s,Y_{s}^{(\tau)}) dW_s^{(\tau)}
\\ &\qquad + \int_{0}^{t-\tau} \int_B c^{(\tau)}(s,Y_{s-}^{(\tau)},x) (\mu^{(\tau)}(ds,dx) - F(dx)ds) 
\\ &\qquad + \int_{0}^{t-\tau} \int_{B^c} c^{(\tau)}(s,Y_{s-}^{(\tau)},x) \mu^{(\tau)}(ds,dx) \bigg] \mathbbm{1}_{]\!] \tau,\varrho ]\!]}(t).
\end{align*}
By the Definitions (\ref{def-a-tau})--(\ref{def-abc}) of $a^{(\tau)}$, $b^{(\tau)}$, $c^{(\tau)}$, we obtain
\begin{align*}
Y_{t-\tau}^{(\tau)} \mathbbm{1}_{]\!] \tau,\varrho ]\!]}(t) &= \bigg[ Y_{\tau}^{(0)} + \int_{0}^{t-\tau} a(\tau+s,Y_{s}^{(\tau)}) ds + \int_{0}^{t-\tau} b(\tau+s,Y_{s}^{(\tau)}) dW_s^{(\tau)}
\\ &\qquad + \int_{0}^{t-\tau} \int_B c(\tau+s,Y_{s-}^{(\tau)},x) (\mu^{(\tau)}(ds,dx) - F(dx)ds) 
\\ &\qquad + \int_{0}^{t-\tau} \int_{B^c} c(\tau+s,Y_{s-}^{(\tau)},x) \mu^{(\tau)}(ds,dx) \bigg] \mathbbm{1}_{]\!] \tau,\varrho ]\!]}(t).
\end{align*}
Therefore, we get
\begin{align*}
Y_{t-\tau}^{(\tau)} \mathbbm{1}_{]\!] \tau,\varrho ]\!]}(t) &= \bigg[ Y_{\tau}^{(0)} + \int_{\tau}^{t} a(s,Y_{s-\tau}^{(\tau)}) ds + \int_{\tau}^{t} b(s,Y_{s-\tau}^{(\tau)}) dW_s
\\ &\qquad + \int_{\tau}^{t} \int_B c(s,Y_{(s-\tau)-}^{(\tau)},x) (\mu(ds,dx) - F(dx)ds) 
\\ &\qquad + \int_{\tau}^{t} \int_{B^c} c(s,Y_{(s-\tau)-}^{(\tau)},x) \mu(ds,dx) \bigg] \mathbbm{1}_{]\!] \tau,\varrho ]\!]}(t).
\end{align*}
By the Definition (\ref{def-Y-converse}) of $Y$ we obtain
\begin{align*}
Y_{t-\tau}^{(\tau)} \mathbbm{1}_{]\!] \tau,\varrho ]\!]}(t) &= \bigg[ Y_{\tau}^{(0)} + \int_{\tau}^{t} a(s,Y_s) ds + \int_{\tau}^{t} b(s,Y_{s}) dW_s
\\ &\qquad + \int_{\tau}^{t} \int_B c(s,Y_{s-},x) (\mu(ds,dx) - F(dx)ds) 
\\ &\qquad + \int_{\tau}^{t} \int_{B^c} c(s,Y_{s-},x) \mu(ds,dx) \bigg] \mathbbm{1}_{]\!] \tau,\varrho ]\!]}(t).
\end{align*}
Since $Y^{(0)}$ is a local strong solution to (\ref{SDE}) with lifetime $\tau$, we deduce that the process $Y$ given by (\ref{def-Y-converse}) is a local strong solution to (\ref{SDE}) with lifetime $\varrho$.
\end{proof}

Let $k \in \mathbb{N}_0$ be arbitrary. By Lemmas \ref{lemma-F-stopping-time} and \ref{lemma-rho}, the mapping $\varrho_{k+1} - \varrho_k$ is a strictly positive $\mathbb{F}^{(\varrho_k)}$-stopping time. Furthermore, let $\Gamma \in \mathcal{F}_{\varrho_k}$ be arbitrary and let $y_0^{(\varrho_k)} : \Omega \rightarrow \mathcal{H}$ be an arbitrary $\mathcal{F}_0^{(\varrho_k)}$-measurable random variable.

\begin{lemma}\label{lemma-trans-solution-1}
If $Y^{(\varrho_k,\Gamma)}$ is a $\mathbb{F}^{(\varrho_k)}$-adapted local strong solution to (\ref{SDE}) with parameters 
\begin{align}\label{para}
a = a^{(\varrho_k,\Gamma)}, \, b = b^{(\varrho_k,\Gamma)}, \, c = c^{(\varrho_k,\Gamma)}, \, W = W^{(\varrho_k)}, \, \mu = \mu^{(\varrho_k)},
\end{align}
initial condition $y_0^{(\varrho_k)} \mathbbm{1}_{\Gamma}$,
and lifetime $\tau$, then
\begin{align}\label{def-minus}
Y^{(\varrho_k,\Gamma)-} := Y^{(\varrho_k,\Gamma)} - c(\varrho_{k+1},Y_{(\varrho_{k+1} - \varrho_k)-}^{(\varrho_k,\Gamma)},\xi_{\varrho_{k+1}}) \mathbbm{1}_{[\![ \varrho_{k+1} - \varrho_k ]\!]} \mathbbm{1}_{\{ \varrho_{k+1} - \varrho_k \leq \tau \}} \mathbbm{1}_{\Gamma}
\end{align}
is a $\mathbb{F}^{(\varrho_k)}$-adapted local strong solution to (\ref{SDE-without-j}) with parameters (\ref{para}), initial condition $y_0^{(\varrho_k)} \mathbbm{1}_{\Gamma}$, and lifetime $\tau \wedge (\varrho_{k+1} - \varrho_k)$.
\end{lemma}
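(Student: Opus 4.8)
\emph{Proof proposal.} The plan is to exploit that the process $Y^{(\varrho_k,\Gamma)}$ already solves the full SDE (\ref{SDE}) with the shifted parameters (\ref{para}) up to its lifetime $\tau$, and to show that the sole effect of passing to $Y^{(\varrho_k,\Gamma)-}$ via (\ref{def-minus}) is to remove the contribution of the large-jump integral $\int_{B^c}$ over the shortened lifetime $\tau \wedge (\varrho_{k+1}-\varrho_k)$. The crucial structural observation is that, with respect to the shifted Poisson random measure $\mu^{(\varrho_k)}$, the atoms carrying marks in $B^c$ are located at the shifted times $\varrho_{k+1}-\varrho_k < \varrho_{k+2}-\varrho_k < \dots$ with corresponding marks $\xi_{\varrho_{k+1}}, \xi_{\varrho_{k+2}}, \dots$; in particular, on the stochastic interval $[\![ 0, \varrho_{k+1}-\varrho_k ]\!]$ there is at most one such jump, and it occurs exactly at the endpoint $\varrho_{k+1}-\varrho_k$.

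First I would make this precise. Recall that $\varrho_{k+1}-\varrho_k$ is a strictly positive $\mathbb{F}^{(\varrho_k)}$-stopping time by Lemmas \ref{lemma-F-stopping-time} and \ref{lemma-rho}. Since the first $B^c$-atom of $\mu^{(\varrho_k)}$ strictly after shifted time $0$ sits at $\varrho_{k+1}-\varrho_k$ with mark $\xi_{\varrho_{k+1}}$ (the strict inequality $0 < \kappa_n$ in (\ref{integrate-mu}) ensures that no $B^c$-atom is counted at shifted time $0$), the representation (\ref{integrate-mu}), applied to $\mu^{(\varrho_k)}$, gives for every $t \le \varrho_{k+1}-\varrho_k$
\begin{align*}
\int_0^t \int_{B^c} c^{(\varrho_k,\Gamma)}(s, Y_{s-}^{(\varrho_k,\Gamma)},x)\, \mu^{(\varrho_k)}(ds,dx) = c^{(\varrho_k,\Gamma)}\big(\varrho_{k+1}-\varrho_k, Y_{(\varrho_{k+1}-\varrho_k)-}^{(\varrho_k,\Gamma)}, \xi_{\varrho_{k+1}}\big)\, \mathbbm{1}_{\{\varrho_{k+1}-\varrho_k \le t\}}.
\end{align*}
Using $c^{(\varrho_k,\Gamma)}(\varrho_{k+1}-\varrho_k,\cdot,\cdot) = c(\varrho_{k+1},\cdot,\cdot)\mathbbm{1}_{\Gamma}$ from (\ref{def-c-tau}) and restricting to $[\![ 0, \tau \wedge (\varrho_{k+1}-\varrho_k) ]\!]$, where $t \le \varrho_{k+1}-\varrho_k$ combines with $\{\varrho_{k+1}-\varrho_k \le t\}$ to force $t = \varrho_{k+1}-\varrho_k$ (so that $\mathbbm{1}_{\{\varrho_{k+1}-\varrho_k \le t\}}$ becomes the graph indicator $\mathbbm{1}_{[\![\varrho_{k+1}-\varrho_k]\!]}(t)$) while $t \le \tau$ forces $\varrho_{k+1}-\varrho_k \le \tau$, this large-jump integral coincides there with the correction term in (\ref{def-minus}), namely with $c(\varrho_{k+1}, Y_{(\varrho_{k+1}-\varrho_k)-}^{(\varrho_k,\Gamma)}, \xi_{\varrho_{k+1}}) \mathbbm{1}_{[\![\varrho_{k+1}-\varrho_k]\!]}(t)\mathbbm{1}_{\{\varrho_{k+1}-\varrho_k \le \tau\}}\mathbbm{1}_{\Gamma}$.

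Next I would substitute this into the defining equation of the solution. Because $Y^{(\varrho_k,\Gamma)}$ and $Y^{(\varrho_k,\Gamma)-}$ differ, on $[\![ 0, \tau\wedge(\varrho_{k+1}-\varrho_k)]\!]$, only at the single time $\varrho_{k+1}-\varrho_k$, their left limits coincide on this interval and they agree for Lebesgue-almost every $s$. Consequently the drift integral $\int_0^t a^{(\varrho_k,\Gamma)}(s,\cdot)\,ds$, the stochastic integral $\int_0^t b^{(\varrho_k,\Gamma)}(s,\cdot)\,dW_s^{(\varrho_k)}$ (unchanged, since its integrand is modified only on a $d\mathbb{P}\otimes ds$-null set) and the compensated small-jump integral $\int_0^t\int_B c^{(\varrho_k,\Gamma)}(s,\cdot,x)(\mu^{(\varrho_k)}(ds,dx)-F(dx)ds)$ (unchanged, since it depends only on the left limits) all take the same value whether evaluated along $Y^{(\varrho_k,\Gamma)}$ or along $Y^{(\varrho_k,\Gamma)-}$. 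Subtracting the large-jump term, which by the previous step equals $Y^{(\varrho_k,\Gamma)}-Y^{(\varrho_k,\Gamma)-}$ on the interval, from the equation (\ref{SDE}) satisfied by $Y^{(\varrho_k,\Gamma)}$ then yields exactly the equation (\ref{SDE-without-j}) for $Y^{(\varrho_k,\Gamma)-}$ with parameters (\ref{para}), initial condition $y_0^{(\varrho_k)}\mathbbm{1}_{\Gamma}$ and lifetime $\tau \wedge (\varrho_{k+1}-\varrho_k)$. Finally, $\mathbb{F}^{(\varrho_k)}$-adaptedness of $Y^{(\varrho_k,\Gamma)-}$ follows from that of $Y^{(\varrho_k,\Gamma)}$ together with the fact that the subtracted term is supported on the single graph $[\![\varrho_{k+1}-\varrho_k]\!]$, where its value is $\mathcal{F}_{\varrho_{k+1}-\varrho_k}^{(\varrho_k)}$-measurable.

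I expect the main obstacle to be the bookkeeping in the first step: correctly translating between original and shifted time so as to confirm that the first $B^c$-atom of $\mu^{(\varrho_k)}$ after shifted time $0$ is located at $\varrho_{k+1}-\varrho_k$ with mark $\xi_{\varrho_{k+1}}$, and that none is counted at shifted time $0$. Once the large-jump integral is identified with the correction term, the remaining manipulations — swapping $Y^{(\varrho_k,\Gamma)}$ for $Y^{(\varrho_k,\Gamma)-}$ in the other three integrals and subtracting — are routine, precisely because the two processes differ only on a single, Lebesgue-null, stopping-time graph.
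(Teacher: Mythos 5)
Your proposal is correct and follows essentially the same route as the paper's proof: identifying the $\int_{B^c}$ integral on $[\![ 0, \tau\wedge(\varrho_{k+1}-\varrho_k) ]\!]$ with the single correction jump via the atom representation (\ref{integrate-mu}), noting that the left limits (and hence the remaining integrands, up to Lebesgue-null modification) are unaffected, and checking $\mathbb{F}^{(\varrho_k)}$-adaptedness of the subtracted term supported on the graph $[\![ \varrho_{k+1}-\varrho_k ]\!]$. The only cosmetic difference is that you apply the atom representation directly to $\mu^{(\varrho_k)}$ in shifted time, whereas the paper first rewrites the integral against $\mu$ over $(\varrho_k,\varrho_k+t]$ and then invokes (\ref{integrate-mu}); the paper also carries out the adaptedness verification in full set-theoretic detail where you only sketch it.
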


\begin{proof}
We define $J : \Omega \rightarrow \mathcal{H}$ as
\begin{align*}
J := c(\varrho_{k+1},Y_{(\varrho_{k+1} - \varrho_k)-}^{(\varrho_k,\Gamma)},\xi_{\varrho_{k+1}}) \mathbbm{1}_{\{ \varrho_{k+1} - \varrho_k \leq \tau \}} \mathbbm{1}_{\Gamma}
\end{align*}
and the stochastic process $(J_t)_{t \geq 0}$ as $J_t := J \mathbbm{1}_{[\![ \varrho_{k+1} - \varrho_k ]\!]}(t)$. By Lemma \ref{lemma-pred-meas}, the mapping $J$ is $\mathcal{F}_{\varrho_{k+1}}$-measurable. Let $C \in \mathcal{B}(\mathcal{H})$ be an arbitrary Borel set. We define $D_C \in \mathcal{F}_t$ as
\begin{align*}
D_C :=
\begin{cases}
\{ \varrho_{k+1} - \varrho_k \neq t \}, & \text{if } 0 \in C,
\\ \emptyset, & \text{if } 0 \notin C.
\end{cases}
\end{align*}
Then, for each $t \in \mathbb{R}_+$ we have
\begin{align*}
\{ J_t \in C \} &= \{ J \mathbbm{1}_{[\![ \varrho_{k+1} - \varrho_k ]\!]}(t) \in C \} = ( \{ J \in C \} \cap \{ \varrho_{k+1} - \varrho_k = t \} ) \cup D_C
\\ &= ( \{ J \in C \} \cap \{ \varrho_{k+1} = \varrho_k + t \} ) \cup D_C \in \mathcal{F}_{\varrho_k + t} = \mathcal{F}_t^{(\varrho_k)}.
\end{align*}
Consequently, the process $Y^{(\varrho_k,\Gamma)-}$ defined in (\ref{def-minus}) is $\mathbb{F}^{(\varrho_k)}$-adapted. Furthermore, by the Definition (\ref{def-minus}) we have
\begin{align*}
Y_-^{(\varrho_k,\Gamma)} \mathbbm{1}_{[\![ 0, \tau \wedge (\varrho_{k+1} - \varrho_k) ]\!]} = Y_-^{(\varrho_k,\Gamma)-} \mathbbm{1}_{[\![ 0, \tau \wedge (\varrho_{k+1} - \varrho_k) ]\!]}
\end{align*}
and, by the Definition (\ref{def-c-tau}) of $c^{(\varrho_k,\Gamma)}$ and identity (\ref{integrate-mu}) we obtain
\begin{align*}
&\bigg( \int_0^{t} \int_{B^c} c^{(\varrho_k,\Gamma)}(s,Y_{s-}^{(\varrho_k,\Gamma)},x) \mu^{(\varrho_k)}(ds,dx) \bigg) \mathbbm{1}_{[\![ 0, \tau \wedge (\varrho_{k+1} - \varrho_k) ]\!]}(t)
\\ &= \bigg( \int_0^{t} \int_{B^c} c(\varrho_k + s,Y_{s-}^{(\varrho_k,\Gamma)},x) \mathbbm{1}_{\Gamma} \mu^{(\varrho_k)}(ds,dx) \bigg) \mathbbm{1}_{[\![ 0, \tau \wedge (\varrho_{k+1} - \varrho_k) ]\!]}(t)
\\ &= \bigg( \int_{\varrho_k}^{\varrho_k + t} \int_{B^c} c(s,Y_{(s - \varrho_k)-}^{(\varrho_k,\Gamma)},x) \mathbbm{1}_{\Gamma} \mu(ds,dx) \bigg) \mathbbm{1}_{[\![ 0, \tau \wedge (\varrho_{k+1} - \varrho_k) ]\!]}(t)
\\ &= \bigg( \sum_{n \in \mathbb{N}} c(\kappa_n,Y_{(\kappa_n - \varrho_k) -}^{(\varrho_k,\Gamma)}, \xi_{\kappa_n}) \mathbbm{1}_{\{ \xi_{\kappa_n} \notin B \}} \mathbbm{1}_{\{ \varrho_k < \kappa_n \leq \varrho_k + t \}} \bigg) \mathbbm{1}_{[\![ 0, \tau \wedge (\varrho_{k+1} - \varrho_k) ]\!]}(t) \mathbbm{1}_{\Gamma}
\\ &= c(\varrho_{k+1},Y_{(\varrho_{k+1} - \varrho_k)-}^{(\varrho_k,\Gamma)},\xi_{\varrho_{k+1}}) \mathbbm{1}_{[\![ \varrho_{k+1} - \varrho_k ]\!]}(t) \mathbbm{1}_{\{ \varrho_{k+1} - \varrho_k \leq \tau \}} \mathbbm{1}_{\Gamma},
\end{align*}
showing that $Y^{(\varrho_k,\Gamma)-}$ is a local strong solution to (\ref{SDE-without-j}) with parameters (\ref{para}) and lifetime $\tau \wedge (\varrho_{k+1} - \varrho_k)$.
\end{proof}

\begin{lemma}\label{lemma-trans-solution-2}
If $Y^{(\varrho_k,\Gamma)-}$ is a $\mathbb{F}^{(\varrho_k)}$-adapted local strong solution to (\ref{SDE-without-j}) with parameters (\ref{para}), initial condition $y_0^{(\varrho_k)} \mathbbm{1}_{\Gamma}$, and lifetime $\tau$, then
\begin{align}\label{def-plus}
Y^{(\varrho_k,\Gamma)} := Y^{(\varrho_k,\Gamma)-} + c(\varrho_{k+1},Y_{(\varrho_{k+1} - \varrho_k)-}^{(\varrho_k,\Gamma)-},\xi_{\varrho_{k+1}}) \mathbbm{1}_{[\![ \varrho_{k+1} - \varrho_k ]\!]} \mathbbm{1}_{\{ \varrho_{k+1} - \varrho_k \leq \tau \}}
\mathbbm{1}_{\Gamma}
\end{align}
is a $\mathbb{F}^{(\varrho_k)}$-adapted local strong solution to (\ref{SDE}) with parameters (\ref{para}), initial condition $y_0^{(\varrho_k)} \mathbbm{1}_{\Gamma}$, and lifetime $\tau \wedge (\varrho_{k+1} - \varrho_k)$.
\end{lemma}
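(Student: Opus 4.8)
The plan is to prove this statement as the exact counterpart of Lemma~\ref{lemma-trans-solution-1}, reversing its direction: there the large jump at $\varrho_{k+1}$ was removed to pass from a solution of (\ref{SDE}) to one of (\ref{SDE-without-j}), and here it is added back to pass from (\ref{SDE-without-j}) to (\ref{SDE}). Accordingly, I would reuse the three ingredients of that proof --- adaptedness of the added jump term, the coincidence of left limits, and the evaluation of the large-jump integral via (\ref{integrate-mu}) --- with the roles of $Y^{(\varrho_k,\Gamma)}$ and $Y^{(\varrho_k,\Gamma)-}$ interchanged. For adaptedness, I would set $J := c(\varrho_{k+1},Y_{(\varrho_{k+1}-\varrho_k)-}^{(\varrho_k,\Gamma)-},\xi_{\varrho_{k+1}}) \mathbbm{1}_{\{ \varrho_{k+1}-\varrho_k \leq \tau \}} \mathbbm{1}_{\Gamma}$ and $J_t := J \mathbbm{1}_{[\![ \varrho_{k+1}-\varrho_k ]\!]}(t)$. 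The mapping $J$ is $\mathcal{F}_{\varrho_{k+1}}$-measurable by Lemma~\ref{lemma-pred-meas}, and the identity $\{ J_t \in C \} = ( \{ J \in C \} \cap \{ \varrho_{k+1} = \varrho_k + t \} ) \cup D_C \in \mathcal{F}_t^{(\varrho_k)}$ (with $D_C$ as in the proof of Lemma~\ref{lemma-trans-solution-1}) shows that $(J_t)_{t \geq 0}$ is $\mathbb{F}^{(\varrho_k)}$-adapted; hence so is $Y^{(\varrho_k,\Gamma)}$.

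Next I would record the key left-limit identity. Since the term added in (\ref{def-plus}) is supported on the single graph $[\![ \varrho_{k+1}-\varrho_k ]\!]$, the processes $Y^{(\varrho_k,\Gamma)}$ and $Y^{(\varrho_k,\Gamma)-}$ differ at most at the one instant $\varrho_{k+1}-\varrho_k$, so their left limits agree everywhere; in particular $Y_-^{(\varrho_k,\Gamma)} \mathbbm{1}_{[\![ 0, \tau \wedge (\varrho_{k+1}-\varrho_k) ]\!]} = Y_-^{(\varrho_k,\Gamma)-} \mathbbm{1}_{[\![ 0, \tau \wedge (\varrho_{k+1}-\varrho_k) ]\!]}$. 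Because altering a predictable or Lebesgue integrand at a single time point leaves the corresponding $dt$-, $dW^{(\varrho_k)}$- and compensated small-jump integrals unchanged, the drift, diffusion and $\int_B$-terms built from $Y^{(\varrho_k,\Gamma)}$ coincide on $[\![ 0, \tau \wedge (\varrho_{k+1}-\varrho_k) ]\!]$ with those built from $Y^{(\varrho_k,\Gamma)-}$.

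Then I would evaluate the large-jump integral exactly as in Lemma~\ref{lemma-trans-solution-1}: using the Definition (\ref{def-c-tau}), the shift back to $\mu$, and identity (\ref{integrate-mu}), the resulting sum runs over the jump times $\kappa_n$ with mark in $B^c$ lying in $]\!] \varrho_k, \varrho_k+t ]\!]$, and on $[\![ 0, \tau \wedge (\varrho_{k+1}-\varrho_k) ]\!]$ the only such time is $\varrho_{k+1}$, so the integral collapses to $c(\varrho_{k+1},Y_{(\varrho_{k+1}-\varrho_k)-}^{(\varrho_k,\Gamma)},\xi_{\varrho_{k+1}}) \mathbbm{1}_{[\![ \varrho_{k+1}-\varrho_k ]\!]}(t) \mathbbm{1}_{\{ \varrho_{k+1}-\varrho_k \leq \tau \}} \mathbbm{1}_{\Gamma}$, which by the left-limit identity equals the term added in (\ref{def-plus}). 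Adding this to the defining equation of $Y^{(\varrho_k,\Gamma)-}$ as a solution of (\ref{SDE-without-j}) then yields precisely the defining equation of (\ref{SDE}) for $Y^{(\varrho_k,\Gamma)}$ on $[\![ 0, \tau \wedge (\varrho_{k+1}-\varrho_k) ]\!]$, establishing the claim.

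The main obstacle is the bookkeeping around the single jump: one must be sure that within the lifetime $\tau \wedge (\varrho_{k+1}-\varrho_k)$ the large-jump integral captures exactly the jump at $\varrho_{k+1}$ and nothing else, and that replacing $Y_-^{(\varrho_k,\Gamma)-}$ by $Y_-^{(\varrho_k,\Gamma)}$ in the integrand is legitimate. Both points rest on the fact that $\varrho_{k+1}-\varrho_k$ is a strictly positive $\mathbb{F}^{(\varrho_k)}$-stopping time (Lemmas~\ref{lemma-F-stopping-time} and~\ref{lemma-rho}) marking the first post-$\varrho_k$ large jump, together with the left-limit identity above.
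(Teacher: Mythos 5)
Your proposal is correct and matches the paper's intent exactly: the paper's own proof consists of the single remark that the argument is analogous to that of Lemma~\ref{lemma-trans-solution-1}, and what you have written is precisely that analogy carried out in detail (adaptedness of the added jump term via Lemma~\ref{lemma-pred-meas}, the left-limit identity on $[\![ 0, \tau \wedge (\varrho_{k+1}-\varrho_k) ]\!]$, and the collapse of the $\int_{B^c}$-integral to the single jump at $\varrho_{k+1}$ via (\ref{integrate-mu})). No gaps; your handling of the interchange of $Y^{(\varrho_k,\Gamma)}$ and $Y^{(\varrho_k,\Gamma)-}$ in the integrands is exactly the point the paper leaves implicit.
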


\begin{proof}
The proof is analogous to that of Lemma \ref{lemma-trans-solution-1}.
\end{proof}

\subsection{Uniqueness of strong solutions to Hilbert space valued SDEs}

Now, we shall deal with the uniqueness of strong solutions to the SDE (\ref{SDE}).

\begin{proposition}\label{prop-unique}
We suppose that the mappings $(a,b,c|_B)$ are locally Lipschitz. Then, uniqueness of local strong solutions to (\ref{SDE-without-j}) holds.
\end{proposition}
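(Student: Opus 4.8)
The plan is to run the classical localization-plus-Gronwall argument for pathwise uniqueness, adapted to the merely local Lipschitz setting and to the comparison of two local solutions with (possibly different) lifetimes. Let $Y, Y'$ be two local strong solutions to (\ref{SDE-without-j}) with initial conditions $y_0, y_0'$ and lifetimes $\tau, \tau'$, and set $\Gamma := \{ y_0 = y_0' \} \in \mathcal{F}_0$. Since $\Gamma$ is $\mathcal{F}_0$-measurable, the indicator $\mathbbm{1}_{\Gamma}$ is a predictable factor constant in time, so it commutes with all the stochastic integrals and may be pulled inside; in particular $\mathbbm{1}_{\Gamma}(y_0 - y_0') = 0$. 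To exploit the local Lipschitz property I would introduce the localizing stopping times
\[
\sigma_n := \inf \{ t \geq 0 : \| Y_t \| \geq n \text{ or } \| Y_t' \| \geq n \}, \qquad \rho_n := \sigma_n \wedge \tau \wedge \tau' \wedge n,
\]
so that $\| Y_{s-} \|, \| Y_{s-}' \| \leq n$ on $[\![ 0, \rho_n ]\!]$ and everything takes place on the bounded horizon $[0,n]$. Because both solutions are c\`adl\`ag, their paths are bounded on the compact interval $[0, \tau \wedge \tau']$ on $\{ \tau \wedge \tau' < \infty \}$, so $\rho_n \uparrow \tau \wedge \tau'$, and on $\{ \tau \wedge \tau' < \infty \}$ one even has $\rho_n = \tau \wedge \tau'$ for all sufficiently large $n$ (depending on $\omega$).

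Next I would set up the key second-moment estimate. On $\{ t \leq \rho_n \} \cap \Gamma$ both solutions satisfy the integral form of (\ref{SDE-without-j}), so the stopped difference satisfies
\[
\mathbbm{1}_{\Gamma} ( Y_{t \wedge \rho_n} - Y_{t \wedge \rho_n}' ) = \mathbbm{1}_{\Gamma} \big[ I_t^a + I_t^b + I_t^c \big],
\]
where $I^a, I^b, I^c$ are the drift, Wiener, and compensated small-jump integrals of the differences $a(s,Y_s) - a(s,Y_s')$, $b(s,Y_s) - b(s,Y_s')$ and $c(s,Y_{s-},x) - c(s,Y_{s-}',x)$ up to $t \wedge \rho_n$. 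Writing $\phi_n(t) := \mathbb{E} [ \mathbbm{1}_{\Gamma} \| Y_{t \wedge \rho_n} - Y_{t \wedge \rho_n}' \|^2 ]$ and using $\| u+v+w \|^2 \leq 3 ( \| u \|^2 + \| v \|^2 + \| w \|^2 )$, I would estimate the three contributions by, respectively, the Cauchy--Schwarz inequality (for $I^a$), the It\^o isometry for the $Q$-Wiener integral (for $I^b$), and the isometry for compensated Poisson integrals over $B$ (for $I^c$). In each case the local Lipschitz bounds (\ref{loc-Lipschitz-a})--(\ref{loc-Lipschitz-c}) apply because the arguments have norm at most $n$ on $[\![ 0, \rho_n ]\!]$; since $L_n$ is non-decreasing and $\rho_n \leq n$, one may replace $L_n(s)$ by the constant $L_n(n)$. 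After a Fubini step this yields $\phi_n(t) \leq C_n \int_0^t \phi_n(s) \, ds$ with $C_n := 3 L_n(n)^2 (n+2)$, while the boundedness of the integrands by $2n$ on $[\![ 0, \rho_n ]\!]$ also guarantees $\phi_n(t) < \infty$.

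Since $\phi_n(0) = \mathbb{E}[ \mathbbm{1}_{\Gamma} \| y_0 - y_0' \|^2 ] = 0$, Gronwall's lemma forces $\phi_n \equiv 0$. Hence, for each fixed $t$, we have $\mathbbm{1}_{\Gamma} Y_{t \wedge \rho_n} = \mathbbm{1}_{\Gamma} Y_{t \wedge \rho_n}'$ almost surely; as the two stopped processes are c\`adl\`ag, agreement on a countable dense set of times upgrades this to indistinguishability of $\mathbbm{1}_{\Gamma} Y^{\rho_n}$ and $\mathbbm{1}_{\Gamma} (Y')^{\rho_n}$. Letting $n \to \infty$ and using $\rho_n \uparrow \tau \wedge \tau'$ then gives $\mathbbm{1}_{\Gamma} Y^{\tau \wedge \tau'} = \mathbbm{1}_{\Gamma} (Y')^{\tau \wedge \tau'}$ up to indistinguishability, which is precisely the asserted uniqueness.

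I expect the main obstacle to be not the estimates themselves---which are routine once the two isometries are in place---but the careful bookkeeping of the localization: ensuring that $\| Y_{s-} \|, \| Y_{s-}' \| \leq n$ on $[\![ 0, \rho_n ]\!]$ so that the local Lipschitz constant $L_n$ is legitimately applicable, and that the limit $n \to \infty$ genuinely exhausts the stochastic interval $[\![ 0, \tau \wedge \tau' ]\!]$ up to and including its endpoint. The latter point is where the c\`adl\`ag property is essential: it makes the paths bounded on compacts, so $\rho_n$ actually attains $\tau \wedge \tau'$ for large $n$ on $\{ \tau \wedge \tau' < \infty \}$; were one to know equality only on the open interval, the value at the endpoint would have to be recovered from the common left limit together with the common jump $\int_B c(\cdot, Y_{\cdot-}, x) \, \mu(\{ \cdot \}, dx)$.
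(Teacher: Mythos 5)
Your proposal is correct and coincides with the paper's intended argument: the paper's proof of this proposition simply invokes the standard technique of \cite[Thm. 5.2.5]{Karatzas-Shreve} (localization to bounded paths, the It\^{o} isometries for the Wiener and compensated Poisson integrals, and Gronwall's lemma), which is exactly what you carry out in detail, including the correct handling of the endpoint via the c\`{a}dl\`{a}g property.
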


\begin{proof}
We can adopt a standard technique (see, e.g. the proof of Theorem 5.2.5 in \cite{Karatzas-Shreve}), where we apply the It\^{o} isometry and Gronwall's lemma.
\end{proof}

\begin{theorem}\label{thm-SDE-unique}
We suppose that the mappings $(a,b,c|_B)$ are locally Lipschitz. Then, uniqueness of local strong solutions to (\ref{SDE}) holds.
\end{theorem}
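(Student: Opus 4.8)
The plan is to reduce the uniqueness problem for the full equation (\ref{SDE}) to that for the equation (\ref{SDE-without-j}) without large jumps, for which Proposition \ref{prop-unique} already provides uniqueness, and to bridge the two by the interlacing machinery of Lemmas \ref{lemma-solution-tau-1}--\ref{lemma-trans-solution-2}. Let $Y$ and $Y'$ be two $\mathbb{F}$-adapted local strong solutions to (\ref{SDE}) with initial conditions $y_0, y_0'$ and lifetimes $\tau, \tau'$, and set $\Gamma := \{ y_0 = y_0' \} \in \mathcal{F}_0$. I would prove by induction on $k \in \mathbb{N}_0$ the statement
\begin{align*}
Y^{\tau \wedge \tau' \wedge \varrho_k} \mathbbm{1}_{\Gamma} = (Y')^{\tau \wedge \tau' \wedge \varrho_k} \mathbbm{1}_{\Gamma} \quad \text{up to indistinguishability,}
\end{align*}
where the $\varrho_k$ are the successive large-jump times from Lemma \ref{lemma-rho}. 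The case $k = 0$ is trivial since $\varrho_0 = 0$, and since $\mathbb{P}(\varrho_k \to \infty) = 1$ by Lemma \ref{lemma-rho}(3), passing to the limit $k \to \infty$ yields the desired equality $Y^{\tau \wedge \tau'} \mathbbm{1}_{\Gamma} = (Y')^{\tau \wedge \tau'} \mathbbm{1}_{\Gamma}$.

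For the inductive step from $k$ to $k+1$ I would argue on the random interval $[\![ \varrho_k, \varrho_{k+1} ]\!]$, on which no large jumps occur, so that (\ref{SDE}) locally reduces to (\ref{SDE-without-j}). I restrict to the set $\Gamma' := \Gamma \cap \{ \varrho_k \leq \tau \} \cap \{ \varrho_k \leq \tau' \} \in \mathcal{F}_{\varrho_k}$; on $\Gamma \setminus \Gamma'$ one has $\varrho_k > \tau \wedge \tau'$, whence $\tau \wedge \tau' \wedge \varrho_{k+1} = \tau \wedge \tau' \wedge \varrho_k$ and the claim is inherited from step $k$. On $\Gamma'$, Lemma \ref{lemma-solution-tau-1} turns $Y$ and $Y'$ into $\mathbb{F}^{(\varrho_k)}$-adapted local strong solutions $Y^{(\varrho_k,\Gamma')}, (Y')^{(\varrho_k,\Gamma')}$ to (\ref{SDE}) with the shifted parameters (\ref{para}), with initial conditions $Y_{\varrho_k} \mathbbm{1}_{\Gamma'}, Y'_{\varrho_k} \mathbbm{1}_{\Gamma'}$ --- which agree by the induction hypothesis, since $\varrho_k = \tau \wedge \tau' \wedge \varrho_k$ on $\Gamma'$ --- and with lifetimes $(\tau - \varrho_k)^+, (\tau' - \varrho_k)^+$. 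Applying Lemma \ref{lemma-trans-solution-1} to each produces two local strong solutions to (\ref{SDE-without-j}) whose subtracted jumps at $\varrho_{k+1} - \varrho_k$ depend only on the (agreeing) left limits of the solutions; Proposition \ref{prop-unique} then shows that these jump-removed solutions coincide on $\Gamma'$ up to the minimum of their lifetimes, which on $\Gamma'$ equals $(\tau \wedge \tau' \wedge \varrho_{k+1}) - \varrho_k$. Reinstating the common large jump via Lemma \ref{lemma-trans-solution-2} gives $Y^{(\varrho_k,\Gamma')} = (Y')^{(\varrho_k,\Gamma')}$ up to that time on $\Gamma'$, and unwinding the definition $Y^{(\varrho_k,\Gamma')}_{\bullet} = Y_{\varrho_k + \bullet} \mathbbm{1}_{\Gamma'}$ translates this into agreement of $Y$ and $Y'$ on $[\![ \varrho_k, \tau \wedge \tau' \wedge \varrho_{k+1} ]\!]$; combined with the induction hypothesis, this closes the induction.

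The step I expect to require the most care is guaranteeing that the shifted coefficients $(a^{(\varrho_k,\Gamma')}, b^{(\varrho_k,\Gamma')}, c^{(\varrho_k,\Gamma')}|_B)$ are again locally Lipschitz, which is exactly the hypothesis Proposition \ref{prop-unique} needs. After the shift the deterministic functions $L_n$ are evaluated at $\varrho_k + t$, so Lemma \ref{lemma-Lipschitz-trans} only delivers the local Lipschitz property when $\varrho_k \mathbbm{1}_{\Gamma'}$ is bounded, whereas $\varrho_k$ is merely almost surely finite. I would therefore carry out the inductive step on the smaller sets $\Gamma'_m := \Gamma' \cap \{ \varrho_k \leq m \} \in \mathcal{F}_{\varrho_k}$, for which $\varrho_k \mathbbm{1}_{\Gamma'_m} \leq m$ is bounded so that Lemma \ref{lemma-Lipschitz-trans} applies, and then pass to the union $\Gamma' = \bigcup_{m \in \mathbb{N}} \Gamma'_m$, which is legitimate because $\varrho_k$ is almost surely finite. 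The remaining work is the routine bookkeeping of stopping times and lifetimes, in particular checking the inclusions $\Gamma'_m \subset \{ \varrho_k \leq \tau \}$ and $\Gamma'_m \subset \{ \varrho_k \leq \tau' \}$ required by Lemma \ref{lemma-solution-tau-1}, and the identification of the minimal lifetime with $(\tau \wedge \tau' \wedge \varrho_{k+1}) - \varrho_k$ on $\Gamma'_m$.
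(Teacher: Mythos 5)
Your proposal is correct and follows essentially the same route as the paper's own proof: induction along the large-jump times $\varrho_k$, shifting via Lemma \ref{lemma-solution-tau-1}, removing the large jump via Lemma \ref{lemma-trans-solution-1}, applying Proposition \ref{prop-unique} on the truncated sets $\{\varrho_k \leq m\}$ (the paper's $\Gamma_{kn}$) precisely so that Lemma \ref{lemma-Lipschitz-trans} applies, and then passing to the union using $\mathbb{P}(\varrho_k < \infty) = 1$ and concluding via $\mathbb{P}(\varrho_k \rightarrow \infty) = 1$. The only cosmetic differences are that the paper fixes the common lifetime $(\tau \wedge \tau' \wedge \varrho_{k+1} - \varrho_k)^+$ from the outset rather than taking the minimum of the two shifted lifetimes afterwards, and reinstates the large jump directly from Definition (\ref{def-minus}) instead of invoking Lemma \ref{lemma-trans-solution-2}.
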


\begin{proof}
Let $Y$ and $Y'$ be two local strong solutions to (\ref{SDE-without-j}) with initial conditions $y_0$ and $y_0'$, and lifetimes $\tau$ and $\tau'$.
By induction, we will prove that up to indistinguishability
\begin{align}\label{equal-on-0-k}
Y \mathbbm{1}_{[\![ 0, \tau \wedge \tau' \wedge \varrho_k ]\!]} \mathbbm{1}_{\{ y_0 = y_0' \}} = Y' \mathbbm{1}_{[\![ 0, \tau \wedge \tau' \wedge \varrho_k ]\!]} \mathbbm{1}_{\{ y_0 = y_0' \}} \quad \text{for all $k \in \mathbb{N}_0$.}
\end{align}
The identity (\ref{equal-on-0-k}) holds true for $k = 0$, because by Lemma \ref{lemma-rho} we have $\varrho_0 = 0$. 

For the induction step $k \rightarrow k+1$ we suppose that identity (\ref{equal-on-0-k}) is satisfied. We define the stopping time $\tau_k := \tau \wedge \tau' \wedge \varrho_{k+1}$ and the set $\Gamma_k := \{ \varrho_k \leq \tau_k \} \cap \{ y_0 = y_0' \} \in \mathcal{F}_{\varrho_k}$. By Lemma \ref{lemma-solution-tau-1}, the processes $Y^{(\varrho_k,\Gamma_k)} := Y_{\varrho_k + \bullet} \mathbbm{1}_{\Gamma_k}$ and $Y'^{(\varrho_k,\Gamma_k)} := Y'_{\varrho_k + \bullet} \mathbbm{1}_{\Gamma_k}$ defined according to (\ref{def-Y-tau}) are $\mathbb{F}^{(\varrho_k)}$-adapted local strong solutions to (\ref{SDE}) with parameters (\ref{para-pre}), where $\tau=\varrho_k$ and $\Gamma = \Gamma_k$, initial conditions $Y_{\varrho_k} \mathbbm{1}_{\Gamma_k}$ and $Y'_{\varrho_k} \mathbbm{1}_{\Gamma_k}$, and lifetime $(\tau_k - \varrho_k)^+$. 

Let $n \in \mathbb{N}$ be arbitrary and set $\Gamma_{kn} := \Gamma_k \cap \{ \varrho_k \leq n \} \in \mathcal{F}_{\varrho_k}$. The processes $Y^{(\varrho_k,\Gamma_{kn})} := Y^{(\varrho_k,\Gamma_k)} \mathbbm{1}_{\Gamma_{kn}}$ and $Y'^{(\varrho_k,\Gamma_{kn})} := Y'^{(\varrho_k,\Gamma_k)} \mathbbm{1}_{\Gamma_{kn}}$ are $\mathbb{F}^{(\varrho_k)}$-adapted local strong solutions to (\ref{SDE}) with parameters (\ref{para}), where $\Gamma = \Gamma_{kn}$, initial conditions $Y_{\varrho_k} \mathbbm{1}_{\Gamma_{kn}}$ and $Y_{\varrho_k}' \mathbbm{1}_{\Gamma_{kn}}$, and lifetime $(\tau_k - \varrho_k)^+$.
By Lemma \ref{lemma-trans-solution-1}, the processes $Y^{(\varrho_k,\Gamma_{kn})-}$ and $Y'^{(\varrho_k,\Gamma_{kn})-}$ defined according to (\ref{def-minus})
are $\mathbb{F}^{(\varrho_k)}$-adapted local strong solutions to (\ref{SDE-without-j}) with parameters (\ref{para}), where $\Gamma = \Gamma_{kn}$, initial conditions $Y_{\varrho_k} \mathbbm{1}_{\Gamma_{kn}}$ and $Y'_{\varrho_k} \mathbbm{1}_{\Gamma_{kn}}$, and lifetime $( \tau_k - \varrho_k)^+$. According to Lemma \ref{lemma-Lipschitz-trans}, the mappings $(a^{(\varrho_k,\Gamma_{kn})},b^{(\varrho_k,\Gamma_{kn})},c^{(\varrho_k,\Gamma_{kn})}|_B)$ are locally Lipschitz, too. Therefore, by Proposition \ref{prop-unique} we have up to indistinguishability
\begin{align*}
Y^{(\varrho_k,\Gamma_{kn})-} \mathbbm{1}_{[\![ 0,( \tau_k - \varrho_k)^+ ]\!]} = Y'^{(\varrho_k,\Gamma_{kn})-} \mathbbm{1}_{[\![ 0,( \tau_k - \varrho_k)^+ ]\!]} \quad \text{for all $n \in \mathbb{N}$.}
\end{align*}
By the Definition (\ref{def-minus}), we deduce that up to indistinguishability
\begin{align*}
Y^{(\varrho_k,\Gamma_{kn})} \mathbbm{1}_{[\![ 0,( \tau_k - \varrho_k)^+ ]\!]} = Y'^{(\varrho_k,\Gamma_{kn})} \mathbbm{1}_{[\![ 0,( \tau_k - \varrho_k)^+ ]\!]} \quad \text{for all $n \in \mathbb{N}$,}
\end{align*}
and hence, we have up to indistinguishability
\begin{align*}
Y^{(\varrho_k,\Gamma_k)} \mathbbm{1}_{\{ \varrho_k \leq n \}} \mathbbm{1}_{[\![ 0,( \tau_k - \varrho_k)^+ ]\!]} = Y'^{(\varrho_k,\Gamma_k)} \mathbbm{1}_{\{ \varrho_k \leq n \}} \mathbbm{1}_{[\![ 0,( \tau_k - \varrho_k)^+ ]\!]} \quad \text{for all $n \in \mathbb{N}$.}
\end{align*}
By Lemma \ref{lemma-rho} we have $\mathbb{P}(\varrho_k < \infty) = 1$, and hence, we get up to indistinguishability
\begin{align*}
Y^{(\varrho_k,\Gamma_k)} \mathbbm{1}_{[\![ 0,( \tau_k - \varrho_k)^+ ]\!]} = Y'^{(\varrho_k,\Gamma_k)} \mathbbm{1}_{[\![ 0,( \tau_k - \varrho_k)^+ ]\!]}.
\end{align*}
Therefore, we have up to indistinguishability
\begin{align*}
Y_{\varrho_k + \bullet} \mathbbm{1}_{\{ \varrho_k \leq \tau_k \}} \mathbbm{1}_{[\![ 0,( \tau_k - \varrho_k)^+ ]\!]} \mathbbm{1}_{\{ y_0 = y_0' \}} = Y'_{\varrho_k + \bullet} \mathbbm{1}_{\{ \varrho_k \leq \tau_k \}} \mathbbm{1}_{[\![ 0,( \tau_k - \varrho_k)^+ ]\!]} \mathbbm{1}_{\{ y_0 = y_0' \}}.
\end{align*}
Consequently, we have up to indistinguishability
\begin{align*}
Y \mathbbm{1}_{\{ \varrho_k \leq \tau_k \}} \mathbbm{1}_{[\![ \varrho_k, \tau_k ]\!]} \mathbbm{1}_{\{ y_0 = y_0' \}} = Y' \mathbbm{1}_{\{ \varrho_k \leq \tau_k \}} \mathbbm{1}_{[\![ \varrho_k, \tau_k ]\!]} \mathbbm{1}_{\{ y_0 = y_0' \}}.
\end{align*}
Together with the induction hypothesis, it follows that
\begin{align*}
Y \mathbbm{1}_{[\![ 0,\tau_k ]\!]} \mathbbm{1}_{\{ y_0 = y_0' \}} = Y' \mathbbm{1}_{[\![ 0,\tau_k ]\!]} \mathbbm{1}_{\{ y_0 = y_0' \}},
\end{align*}
which establishes (\ref{equal-on-0-k}). Since by Lemma \ref{lemma-rho} we have $\mathbb{P}(\varrho_k \rightarrow \infty) = 1$, we deduce 
\begin{align*}
Y^{\tau \wedge \tau'} \mathbbm{1}_{\{ y_0 = y_0' \}} = (Y')^{\tau \wedge \tau'} \mathbbm{1}_{\{ y_0 = y_0' \}},
\end{align*}
completing the proof.
\end{proof}

\subsection{Existence of strong solutions to Hilbert space valued SDEs}

Now, we shall deal with the existence of strong solutions to the SDE (\ref{SDE}).

\begin{proposition}\label{prop-SDE-existence}
We suppose that the mappings $(a,b,c|_B)$ are locally Lipschitz and satisfy the linear growth condition. Then, existence of strong solutions to (\ref{SDE-without-j}) holds.
\end{proposition}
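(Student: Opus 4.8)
The plan is to reduce the assertion, by truncating the coefficients, to the classical case of \emph{global} Lipschitz and linear growth conditions, for which existence (and uniqueness) of a global strong solution to (\ref{SDE-without-j}) is available by a standard Picard iteration in the space of square-integrable adapted c\`{a}dl\`{a}g processes, using the It\^{o} isometry for the $W$-integral and the corresponding isometry for the compensated integral $\int_B$ (see, e.g., \cite{Karatzas-Shreve} for the finite-dimensional analogue; the Hilbert space case with Poisson noise is analogous, cf. \cite{SPDE}). Before truncating I would first reduce to a \emph{bounded} initial condition: writing $\Omega = \bigcup_{N \in \mathbb{N}} \Omega_N$ with the $\mathcal{F}_0$-measurable sets $\Omega_N := \{ N-1 \leq \| y_0 \| < N \}$, it suffices to solve (\ref{SDE-without-j}) for each bounded initial value $y_0 \mathbbm{1}_{\Omega_N}$ and to paste the resulting solutions together. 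Since the indicators $\mathbbm{1}_{\Omega_N}$ are $\mathcal{F}_0$-measurable, they may be pulled through all of the integrals, so that $\sum_N Y^{(N)} \mathbbm{1}_{\Omega_N}$ again solves (\ref{SDE-without-j}) with initial condition $y_0$.

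For fixed bounded $y_0$ I would introduce the radial retraction $r_n : \mathcal{H} \rightarrow \mathcal{H}$, given by $r_n(y) := y$ for $\| y \| \leq n$ and $r_n(y) := n y / \| y \|$ otherwise, which satisfies $\| r_n(y_1) - r_n(y_2) \| \leq \| y_1 - y_2 \|$ and $\| r_n(y) \| \leq n$. Setting $a_n(t,y) := a(t,r_n(y))$, $b_n(t,y) := b(t,r_n(y))$ and $c_n(t,y,x) := c(t,r_n(y),x)$, the local Lipschitz estimates (\ref{loc-Lipschitz-a})--(\ref{loc-Lipschitz-c}), applied on the ball of radius $n$, show that $(a_n,b_n,c_n|_B)$ are \emph{globally} Lipschitz with constant $L_n(t)$, while the linear growth condition (\ref{lin-growth-a})--(\ref{lin-growth-c}) is inherited. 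Hence for each $n$ there is a global strong solution $Y^{(n)}$ to (\ref{SDE-without-j}) with coefficients $(a_n,b_n,c_n)$ and initial condition $y_0$. Defining the stopping times $\sigma_n := \inf \{ t \geq 0 : \| Y_t^{(n)} \| \geq n \}$, on $[\![ 0, \sigma_n [\![$ we have $r_n(Y^{(n)}) = Y^{(n)}$, so that $Y^{(n)}$ solves the untruncated equation (\ref{SDE-without-j}) up to the lifetime $\sigma_n$. Proposition \ref{prop-unique} then forces $Y^{(n)} = Y^{(m)}$ on $[\![ 0, \sigma_n \wedge \sigma_m ]\!]$ and, in particular, $\sigma_n \leq \sigma_m$ for $n \leq m$; writing $\sigma := \lim_{n \to \infty} \sigma_n$, the consistently defined process $Y := Y^{(n)}$ on $[\![ 0, \sigma_n [\![$ is a local strong solution to (\ref{SDE-without-j}) with lifetime $\sigma$.

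It remains to show that $\sigma = \infty$ almost surely, and this non-explosion estimate is the main obstacle. Fixing a finite horizon $T$, I would apply It\^{o}'s formula to $\| Y_{t \wedge \sigma_n}^{(n)} \|^2$, estimate the drift as well as the Wiener and compensated-Poisson martingale parts by the Burkholder--Davis--Gundy inequality together with the It\^{o} isometries, and then invoke the linear growth condition (\ref{lin-growth-a})--(\ref{lin-growth-c}) with the deterministic bound $K(T)$; Gronwall's lemma then yields a bound $\mathbb{E}\big[ \sup_{s \leq t \wedge \sigma_n} \| Y_s^{(n)} \|^2 \big] \leq C(T)(1 + \mathbb{E} \| y_0 \|^2)$ that is \emph{uniform in} $n$ and finite, since $y_0$ is bounded. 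As $\| Y_{t \wedge \sigma_n}^{(n)} \| \geq n$ on $\{ \sigma_n \leq t \}$ by right-continuity of the paths, Markov's inequality gives $\mathbb{P}(\sigma_n \leq t) \leq C(T)(1 + \mathbb{E} \| y_0 \|^2)/n^2 \to 0$, whence $\mathbb{P}(\sigma \leq T) = 0$ for every $T$ and therefore $\sigma = \infty$ almost surely. Thus $Y$ is a global strong solution for bounded $y_0$, and the pasting argument from the first paragraph completes the proof.
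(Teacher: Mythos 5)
Your proposal is correct and follows essentially the same route as the paper's proof: truncation of the coefficients by the radial retraction to reach the globally Lipschitz case (solved via \cite{SPDE}), a non-explosion estimate via linear growth, Gronwall and a martingale maximal inequality, and a partition of $\Omega$ according to $\| y_0 \|$ to handle general initial conditions. The only differences are cosmetic: you perform the partition first and invoke Proposition \ref{prop-unique} explicitly to glue the truncated solutions, while the paper truncates first for $\mathcal{L}^2$ initial data and concatenates on the stochastic intervals $]\!] \tau_{k-1}, \tau_k ]\!]$.
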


\begin{proof}
If the mappings $(a,b,c|_B)$ are Lipschitz continuous, then we have existence and uniqueness of strong solutions to (\ref{SDE-without-j}) for every initial condition $y_0 \in \mathcal{L}^2(\mathcal{F}_0;\mathcal{H})$, see, e.g. \cite[Cor. 10.3]{SPDE}. 

For $(a,b,c|_B)$ being locally Lipschitz and satisfying the linear growth condition, for any initial condition $y_0 \in \mathcal{L}^2(\mathcal{F}_0;\mathcal{H})$ we adopt the technique from the proof of \cite[Thm. 4.11]{Barbara-pathwise}. For $k \in \mathbb{N}$ we define the retraction
\begin{align*}
R_k : \mathcal{H} \rightarrow \mathcal{H}, \quad R_k(y) :=
\begin{cases}
y, & \text{if $\| y \| \leq k$,}
\\ k \frac{y}{\| y \|}, & \text{if $\| y \| > k$,}
\end{cases}
\end{align*}
and the mappings $a_k : \Omega \times \mathbb{R}_+ \times \mathcal{H} \rightarrow \mathcal{H}$, $b_k : \Omega \times \mathbb{R}_+ \times \mathcal{H} \rightarrow L_2^0(\mathcal{H})$ and $c_k : \Omega \times \mathbb{R}_+ \times \mathcal{H} \times E \rightarrow \mathcal{H}$ as
\begin{align*}
a_k := a \circ R_k, \quad b_k := b \circ R_k \quad \text{and} \quad c_k(\bullet,x) := c(\bullet,x) \circ R_k.
\end{align*}
These mappings are Lipschitz continuous, and hence there exists a strong solution $Y^{(k)}$ to the SDE (\ref{SDE-without-j}) with parameters $a = a_k$, $b = b_k$ and $c = c_k$, and initial condition $y_0$. Using the linear growth condition, Gronwall's lemma and Doob's martingale inequality, we can show that $\mathbb{P}(\tau_k \rightarrow \infty) = 1$, where
\begin{align*}
\tau_k := \inf \{ t \geq 0 : \| Y_t^{(k)} \| > k \}, \quad k \in \mathbb{N}_0, 
\end{align*}
i.e. the solutions do not explode. Consequently, the process 
\begin{align*}
Y := y_0 \mathbbm{1}_{[\![ \tau_0 ]\!]} + \sum_{k \in \mathbb{N}} Y^{(k)} \mathbbm{1}_{]\!] \tau_{k-1}, \tau_k ]\!]}
\end{align*}
is a strong solution to (\ref{SDE-without-j}) with initial condition $y_0$.

Finally, for a general $\mathcal{F}_0$-measurable initial condition $y_0 : \Omega \rightarrow \mathcal{H}$, the process $Y := \sum_{k \in \mathbb{N}} Y^{(k)} \mathbbm{1}_{\Omega_k}$ is a strong solution to (\ref{SDE-without-j}) with initial condition $y_0$, where $(\Omega_k)_{k \in \mathbb{N}} \subset \mathcal{F}_0$ denotes the partition of $\Omega$ given by
$\Omega_k := \{ \| y_0 \| \in [k-1,k) \}$, and where for each $k \in \mathbb{N}$ the process $Y^{(k)}$ denotes a strong solution to (\ref{SDE-without-j}) with initial condition $y_0 \mathbbm{1}_{\Omega_k}$.
\end{proof}

\begin{theorem}\label{thm-SDE-existence}
We suppose that the mappings $(a,b,c|_B)$ are locally Lipschitz and satisfy the linear growth condition. Then, existence of strong solutions to (\ref{SDE}) holds.
\end{theorem}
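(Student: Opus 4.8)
The plan is to run an induction over the large-jump times $\varrho_k$, extending a solution from lifetime $\varrho_k$ to lifetime $\varrho_{k+1}$, and then to let $k \to \infty$, using $\mathbb{P}(\varrho_k \to \infty) = 1$ from Lemma \ref{lemma-rho}. Concretely, I would prove by induction the statement $H_k$: \emph{there exists an $\mathbb{F}$-adapted local strong solution to (\ref{SDE}) with initial condition $y_0$ and lifetime $\varrho_k$}. The base case $k = 0$ is trivial, since $\varrho_0 = 0$ by Lemma \ref{lemma-rho}, so the constant process $y_0$ on $[\![ 0, \varrho_0 ]\!]$ does the job.

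For the induction step $k \to k+1$, assume $H_k$ holds with solution $Y$. I would pass to the shifted filtration $\mathbb{F}^{(\varrho_k)}$ and solve the equation on the jump-free interval $[\![ \varrho_k, \varrho_{k+1} ]\!]$, started from $Y_{\varrho_k}$. Since $\varrho_k$ is finite but unbounded, Lemma \ref{lemma-Lipschitz-trans} cannot be applied to the shifted parameters directly; this is the technical heart of the argument. To circumvent it, I would localize: set $\Gamma_n := \{ \varrho_k \leq n \} \in \mathcal{F}_{\varrho_k}$, so that $\varrho_k \mathbbm{1}_{\Gamma_n}$ is bounded by $n$. Then Lemma \ref{lemma-Lipschitz-trans} shows that the parameters $(a^{(\varrho_k,\Gamma_n)}, b^{(\varrho_k,\Gamma_n)}, c^{(\varrho_k,\Gamma_n)}|_B)$ are locally Lipschitz and of linear growth, so Proposition \ref{prop-SDE-existence}, applied in the filtration $\mathbb{F}^{(\varrho_k)}$, yields a global strong solution to (\ref{SDE-without-j}) with those parameters and initial condition $Y_{\varrho_k} \mathbbm{1}_{\Gamma_n}$. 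Regarding it as a local solution with lifetime $\varrho_{k+1} - \varrho_k$, which is a strictly positive $\mathbb{F}^{(\varrho_k)}$-stopping time, I would interlace the single large jump at $\varrho_{k+1}$ back in via Lemma \ref{lemma-trans-solution-2}, obtaining an $\mathbb{F}^{(\varrho_k)}$-adapted local strong solution $Y^{(\varrho_k,\Gamma_n)}$ to the full SDE (\ref{SDE}) with parameters (\ref{para}), initial condition $Y_{\varrho_k} \mathbbm{1}_{\Gamma_n}$, and lifetime $\varrho_{k+1} - \varrho_k$.

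Next I would assemble these localized pieces into a solution carrying the full initial condition $Y_{\varrho_k}$ and the full shifted parameters. Decomposing the almost sure event $\{ \varrho_k < \infty \}$ into the disjoint sets $A_n := \Gamma_n \setminus \Gamma_{n-1} \in \mathcal{F}_{\varrho_k}$ (with $\Gamma_0 := \emptyset$), and exploiting that $a^{(\varrho_k,A_n)} = a^{(\varrho_k)} \mathbbm{1}_{A_n}$ (and likewise for $b$ and $c$), the process $Y^{(\varrho_k)} := \sum_{n} Y^{(\varrho_k,\Gamma_n)} \mathbbm{1}_{A_n}$ is an $\mathbb{F}^{(\varrho_k)}$-adapted local strong solution to (\ref{SDE}) with parameters (\ref{para-pre-tau}), initial condition $Y_{\varrho_k}$, and lifetime $\varrho_{k+1} - \varrho_k$; this patching is exactly the device used in the last paragraph of the proof of Proposition \ref{prop-SDE-existence}. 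Finally, Lemma \ref{lemma-solution-tau-2}, applied with $\tau = \varrho_k$, $\varrho = \varrho_{k+1}$, $Y^{(0)} = Y$ and $Y^{(\tau)} = Y^{(\varrho_k)}$ (noting that $Y^{(\varrho_k)}$ starts at $Y_{\varrho_k} = Y^{(0)}_{\varrho_k}$), glues the shifted solution onto $Y$, producing an $\mathbb{F}$-adapted local strong solution to (\ref{SDE}) with lifetime $\varrho_{k+1}$ that coincides with $Y$ on $[\![ 0, \varrho_k ]\!]$. This establishes $H_{k+1}$.

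The resulting solutions are consistent, each extending the previous one on its shorter lifetime, so they define a single process that is a local strong solution to (\ref{SDE}) with lifetime $\varrho_k$ for every $k$. Since $\mathbb{P}(\varrho_k \to \infty) = 1$ by Lemma \ref{lemma-rho}, this process is in fact a global strong solution with initial condition $y_0$, which proves the theorem. I expect the main obstacle to be precisely the $\{ \varrho_k \leq n \}$-localization forced by the boundedness hypothesis of Lemma \ref{lemma-Lipschitz-trans}, together with the bookkeeping needed to recombine the $n$-indexed solutions into one carrying the full parameters $a^{(\varrho_k)}$ and initial condition $Y_{\varrho_k}$, and to track adaptedness and lifetimes through the two interlacing operations.
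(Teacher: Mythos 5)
Your proposal is correct and follows essentially the same route as the paper's proof: induction over the large-jump times $\varrho_k$, localization on sets where $\varrho_k$ is bounded so that Lemma \ref{lemma-Lipschitz-trans} and Proposition \ref{prop-SDE-existence} apply, interlacing the large jump via Lemma \ref{lemma-trans-solution-2}, patching over a partition of $\Omega$, gluing via Lemma \ref{lemma-solution-tau-2}, and concluding with $\mathbb{P}(\varrho_k \to \infty) = 1$. The only cosmetic difference is that the paper localizes directly on the partition $\Gamma_{kn} = \{ \varrho_k \in [n-1,n) \}$ rather than on the increasing sets $\{ \varrho_k \leq n \}$ followed by taking differences, which changes nothing of substance.
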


\begin{proof}
Let $y_0 : \Omega \rightarrow \mathcal{H}$ be an arbitrary $\mathcal{F}_0$-measurable random variable. By induction, we will prove that for each $k \in \mathbb{N}_0$ there exists a local strong solution $Y^{(k)}$ to (\ref{SDE}) with initial condition $y_0$ and lifetime $\varrho_k$. By Lemma \ref{lemma-rho} we have $\varrho_0 = 0$, providing the assertion for $k = 0$. 

For the induction step $k \rightarrow k+1$ let $Y^{(k)}$ be a local strong solution to (\ref{SDE}) with initial condition $y_0$ and lifetime $\varrho_k$. Let $n \in \mathbb{N}$ be arbitrary and set $\Gamma_{kn} := \{ \varrho_k \in [n-1,n) \} \in \mathcal{F}_{\varrho_k}$. By Lemma \ref{lemma-Lipschitz-trans}, the mappings $(a^{(\varrho_k,\Gamma_{kn})},b^{(\varrho_k,\Gamma_{kn})},c^{(\varrho_k,\Gamma_{kn})}|_B)$ are locally Lipschitz, too. Therefore, by Proposition \ref{prop-SDE-existence} there exists a $\mathbb{F}^{(\varrho_k)}$-adapted strong a solution $Y^{(\varrho_k,\Gamma_{kn})-}$ to (\ref{SDE-without-j}) with parameters (\ref{para}), where $\Gamma = \Gamma_{kn}$, and initial condition $Y_{\varrho_k}^{(k)} \mathbbm{1}_{\Gamma_{kn}}$. By Lemma \ref{lemma-trans-solution-2}, the process $Y^{(\varrho_k,\Gamma_{kn})}$ defined according to (\ref{def-plus}) is a $\mathbb{F}^{(\varrho_k)}$-adapted local strong solution to (\ref{SDE}) with parameters (\ref{para}), where $\Gamma = \Gamma_{kn}$, initial condition $Y_{\varrho_k}^{(k)} \mathbbm{1}_{\Gamma_{kn}}$, and lifetime $\varrho_{k+1} - \varrho_k$. Noting that $(\Gamma_{kn})_{n \in \mathbb{N}}$ is a partition of $\Omega$, it follows that $Y^{(\varrho_k)} := \sum_{n \in \mathbb{N}} Y^{(\varrho_k,\Gamma_{kn})}$ is a $\mathbb{F}^{(\varrho_k)}$-adapted local strong solution to (\ref{SDE}) with initial condition $Y_{\varrho_k}^{(k)}$ and lifetime $\varrho_{k+1} - \varrho_k$. By Lemma \ref{lemma-solution-tau-2}, the process
\begin{align*}
Y^{(k+1)} := Y^{(k)} \mathbbm{1}_{[\![ 0,\varrho_k ]\!]} + Y_{\bullet - \varrho_k}^{(\varrho_k)} \mathbbm{1}_{]\!] \varrho_k,\varrho_{k+1} ]\!]}
\end{align*}
defined according to (\ref{def-Y-converse}) is a $\mathbb{F}$-adapted local strong solution to (\ref{SDE}) with initial condition $y_0$ and lifetime $\varrho_{k+1}$.

Consequently, for each $k \in \mathbb{N}_0$ there exists a local strong solution $Y^{(k)}$ to (\ref{SDE}) with initial condition $y_0$ and lifetime $\varrho_k$. By Lemma \ref{lemma-rho} we have $\mathbb{P}(\varrho_k \rightarrow \infty) = 1$. Hence, it follows that 
\begin{align*}
Y := y_0 \mathbbm{1}_{[\![ \varrho_0 ]\!]} + \sum_{k \in \mathbb{N}} Y^{(k)} \mathbbm{1}_{]\!] \varrho_{k-1}, \varrho_k ]\!]}
\end{align*}
is a $\mathbb{F}$-adapted strong solution to (\ref{SDE}) with initial condition $y_0$.
\end{proof}

\begin{theorem}\label{thm-SDE-ex-local}
We suppose that the mappings $(a,b,c|_B)$ are locally Lipschitz and locally bounded. Then, existence of local strong solutions to (\ref{SDE}) holds.
\end{theorem}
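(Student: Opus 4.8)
The plan is to mimic the structure of the proof of Theorem~\ref{thm-SDE-existence}, but with the global existence statement of Proposition~\ref{prop-SDE-existence} replaced by a purely local one, since the linear growth condition --- which was used there precisely to rule out explosion --- is no longer available. Concretely, I would first establish that under the present hypotheses existence of \emph{local} strong solutions to the equation~(\ref{SDE-without-j}) without large jumps holds, and then interlace the first large jump by a single application of Lemma~\ref{lemma-trans-solution-2} with $k=0$ and $\Gamma=\Omega$. Since $\varrho_0=0$, the parameters in~(\ref{para}) then reduce to the original data $(a,b,c,W,\mu)$, the filtration $\mathbb{F}^{(\varrho_0)}$ coincides with $\mathbb{F}$, and Lemma~\ref{lemma-trans-solution-2} converts a local solution of~(\ref{SDE-without-j}) with some lifetime $\tau$ into a local solution of~(\ref{SDE}) with lifetime $\tau \wedge \varrho_1$. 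As $\tau > 0$ and, by Lemma~\ref{lemma-rho}, $\varrho_1 > 0$ almost surely, this lifetime is strictly positive, which is all that is required.

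For the local existence for~(\ref{SDE-without-j}) I would adapt the retraction argument from the proof of Proposition~\ref{prop-SDE-existence} (following \cite[Thm.~4.11]{Barbara-pathwise}). Fix an $\mathcal{F}_0$-measurable $y_0$ and decompose $\Omega$ into the $\mathcal{F}_0$-sets $\Omega_m := \{ \| y_0 \| \in [m-1,m) \}$, on which $\| y_0 \| < m$. For each $m$ let $R_m$ be the retraction onto the closed ball of radius $m$ and put $a_m := a \circ R_m$, $b_m := b \circ R_m$ and $c_m(\bullet,x) := c(\bullet,x) \circ R_m$. Because $R_m$ is a $1$-Lipschitz map with range in $\{ \| y \| \leq m \}$, the local Lipschitz property yields that $(a_m,b_m,c_m|_B)$ are (globally) Lipschitz, and local boundedness yields that they are bounded; hence \cite[Cor.~10.3]{SPDE} provides a global strong solution $Y^{(m)}$ to~(\ref{SDE-without-j}) with coefficients $(a_m,b_m,c_m)$ and square-integrable initial condition $y_0 \mathbbm{1}_{\Omega_m}$. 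Setting $\tau_m := \inf \{ t \geq 0 : \| Y_t^{(m)} \| > m \}$, the retraction is the identity along the trajectory on $[\![ 0,\tau_m ]\!]$, so $Y^{(m)}$ is there a local strong solution to~(\ref{SDE-without-j}) with the original coefficients. Since $\| Y_0^{(m)} \| = \| y_0 \mathbbm{1}_{\Omega_m} \| < m$ everywhere, c\`{a}dl\`{a}g regularity forces $\tau_m > 0$ almost surely. Gluing along the partition, the process $Y := \sum_{m \in \mathbb{N}} Y^{(m)} \mathbbm{1}_{\Omega_m}$ with lifetime $\tau := \sum_{m \in \mathbb{N}} \tau_m \mathbbm{1}_{\Omega_m}$ is then a local strong solution to~(\ref{SDE-without-j}) with initial condition $y_0$ and strictly positive lifetime.

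The interlacing step is then immediate: applying Lemma~\ref{lemma-trans-solution-2} as described produces the desired local strong solution to~(\ref{SDE}). I expect the main obstacle to lie entirely in the local existence statement for~(\ref{SDE-without-j}), and within it in the two points where the absence of linear growth makes itself felt: verifying that the retracted coefficients are genuinely globally Lipschitz and bounded (so that \cite[Cor.~10.3]{SPDE} applies), and arguing the strict positivity of the exit times $\tau_m$ from the c\`{a}dl\`{a}g property rather than from a non-explosion estimate. Everything else --- the reduction to $k=0$ and the identification of the parameters~(\ref{para}) with $(a,b,c,W,\mu)$ --- is bookkeeping already carried out in the preceding lemmas.
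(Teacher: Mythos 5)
Your proof is correct, and it takes a genuinely different route from the paper's. The paper also truncates with the retraction $R_k$, partitions $\Omega$ by the norm of $y_0$, and stops at the exit time of the ball of radius $k$; but it applies the truncation to the \emph{full} equation (\ref{SDE}) and invokes the global existence result, Theorem \ref{thm-SDE-existence}, for the truncated coefficients $(a_k,b_k,c_k)$ (which are locally Lipschitz and, by local boundedness, of linear growth), so the large jumps are handled implicitly by the interlacing induction already carried out inside that theorem. You instead work at the level of (\ref{SDE-without-j}): you prove a local analogue of Proposition \ref{prop-SDE-existence} (retraction, \cite[Cor.~10.3]{SPDE}, exit times $\tau_m$, gluing along $(\Omega_m)_{m \in \mathbb{N}}$) and then re-attach the large jumps by a single application of Lemma \ref{lemma-trans-solution-2} with $k=0$, $\Gamma=\Omega$, where $\varrho_0 = 0$ makes the parameters (\ref{para}) collapse to the original data. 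Both arguments are sound, and your verifications at the delicate points are right: the retracted coefficients are Lipschitz with (time-dependent, as the framework permits) constant $L_m(t)$ and bounded by $M_m(t)$, the initial condition $y_0 \mathbbm{1}_{\Omega_m}$ is square-integrable, and $\tau_m > 0$ follows from right-continuity and $\| Y_0^{(m)} \| < m$. What each approach buys: yours needs strictly less machinery --- no appeal to Theorem \ref{thm-SDE-existence}, hence no induction over all jump times $\varrho_k$, since a local solution only has to survive until $\tau \wedge \varrho_1 > 0$; the paper's is shorter on the page once Theorem \ref{thm-SDE-existence} is available, and it yields a local solution with the larger lifetime $\tau_k$, i.e., one persisting across arbitrarily many large jumps until exit from the ball, rather than being cut off at the first large jump. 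One cosmetic point: the identification of the truncated with the original coefficients on $[\![ 0,\tau_m ]\!]$ should be phrased via the left limits $Y_{s-}^{(m)}$ (which remain in the closed ball of radius $m$ for $s \leq \tau_m$) for the jump integral, and via Lebesgue-negligibility of the single time $s = \tau_m$ for the drift and diffusion integrals; this is the same standard point the paper itself glosses over, so it is not a gap.
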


\begin{proof}
Let $y_0 : \Omega \rightarrow \mathcal{H}$ be an arbitrary $\mathcal{F}_0$-measurable random variable. We define the partition $(\Omega_k)_{k \in \mathbb{N}} \subset \mathcal{F}_0$ of $\Omega$ by $\Omega_k := \{ \| y_0 \| \in [k-1,k) \}$.
Furthermore, for each $k \in \mathbb{N}$ we define 
the mappings $a_k : \Omega \times \mathbb{R}_+ \times \mathcal{H} \rightarrow \mathcal{H}$, $b_k : \Omega \times \mathbb{R}_+ \times \mathcal{H} \rightarrow L_2^0(\mathcal{H})$ and $c_k : \Omega \times \mathbb{R}_+ \times \mathcal{H} \times E \rightarrow \mathcal{H}$ as in the proof of Proposition \ref{prop-SDE-existence}.
These mappings are locally Lipschitz and satisfy the linear growth condition. By Theorem \ref{thm-SDE-existence}, there exists a strong solution $Y^{(k)}$ to (\ref{SDE}) with parameters $a = a_k$, $b = b_k$ and $c = c_k$, and initial condition $y_0 \mathbbm{1}_{\Omega_k}$. The stopping time
\begin{align*}
\tau_k := \inf \{ t \geq 0 : \| Y_t^{(k)} \| > k \} 
\end{align*}
is strictly positive, and $Y^{(k)}$ is a local strong solution to (\ref{SDE}) with initial condition $y_0 \mathbbm{1}_{\Omega_k}$ and lifetime $\tau_k$. The stopping time $\tau := \sum_{k \in \mathbb{N}} \tau_k \mathbbm{1}_{\Omega_k}$ is strictly positive, and the process $Y := \sum_{k \in \mathbb{N}} Y^{(k)} \mathbbm{1}_{\Omega_k}$ is a local strong solution to (\ref{SDE}) with initial condition $y_0$ and lifetime $\tau$.
\end{proof}

\subsection{Comparison with the method of successive approximations}\label{sec-successive}

So far, our investigations provide the following result concerning existence and uniqueness of global strong solutions to the SDE (\ref{SDE}).

\begin{theorem}\label{thm-compare}
If $(a,b,c|_B)$ are locally Lipschitz and satisfy the linear growth condition, then existence and uniqueness of strong solutions to (\ref{SDE}) holds.
\end{theorem}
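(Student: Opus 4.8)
The plan is to observe that Theorem \ref{thm-compare} is simply a combination of the two main results already established in this section, and requires no new argument. The hypotheses here—that $(a,b,c|_B)$ are locally Lipschitz and satisfy the linear growth condition—are precisely those of Theorem \ref{thm-SDE-existence}, and they are strictly stronger than the single hypothesis (local Lipschitz) required by Theorem \ref{thm-SDE-unique}. So both halves of the claimed ``existence and uniqueness'' are already in hand.

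For the existence part I would invoke Theorem \ref{thm-SDE-existence} directly: under local Lipschitz and linear growth, existence of (global) strong solutions to (\ref{SDE}) holds. Nothing further is needed, since the interlacing construction in that theorem already produces a global solution on $[\![ 0, \infty [\![$ using $\mathbb{P}(\varrho_k \to \infty) = 1$ from Lemma \ref{lemma-rho}.

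For the uniqueness part I would invoke Theorem \ref{thm-SDE-unique}, which yields uniqueness of \emph{local} strong solutions to (\ref{SDE}) from the local Lipschitz assumption alone (the linear growth condition is not needed here). To pass from local to global uniqueness, I would appeal to the remark made just after the definition of uniqueness: setting $\tau := \infty$ and $\tau' := \infty$ shows that uniqueness of local strong solutions implies uniqueness of (global) strong solutions. Together with the existence statement, this completes the proof.

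There is no substantial obstacle, as both ingredients are fully proven; the only points worth flagging explicitly are that the linear growth condition plays no role in the uniqueness half, and that the reduction from local to global uniqueness is automatic and does not require any additional localization argument.
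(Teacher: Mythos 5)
Your proposal is correct and matches the paper's own proof, which likewise derives Theorem \ref{thm-compare} directly from Theorems \ref{thm-SDE-unique} and \ref{thm-SDE-existence}. Your additional remarks—that the linear growth condition is only needed for existence, and that global uniqueness follows from local uniqueness by taking $\tau = \tau' = \infty$—are exactly the observations the paper makes (the latter in the remark following the definition of uniqueness), so nothing is missing.
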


\begin{proof}
This is a direct consequence of Theorems \ref{thm-SDE-unique} and \ref{thm-SDE-existence}.
\end{proof}

Now, we shall provide a comparison with reference \cite{Cao}, where the authors also study Hilbert space valued SDEs of the type (\ref{SDE}). Their result \cite[Theorem 2.1]{Cao} is based on the method of successive approximations (see also \cite{Yamada, Taniguchi}) and considerably goes beyond the classical global Lipschitz conditions. For the sake of simplicity, let us recall the required assumptions in the time-homogeneous Markovian framework.  
In order to apply \cite[Theorem 2.1]{Cao}, for some constant $p \geq 2$ we need the estimate
\begin{equation}\label{cond-Zhang}
\begin{aligned}
&\| a(y_1) - a(y_2) \|^p + \| b(y_1) - b(y_2) \|_{L_2^0(\mathcal{H})}^p + \int_B \| c(y_1,x) - c(y_2,y) \|^p F(dx)
\\ &\quad + \bigg( \int_B \| c(y_1,x) - c(y_2,x) \|^2 F(dx) \bigg)^{p/2} \leq \kappa(\|y_1 - y_2\|^p) \quad \text{for all $y_1,y_2 \in \mathcal{H}$,}
\end{aligned}
\end{equation}
where $\kappa : \mathbb{R}_+ \rightarrow \mathbb{R}_+$ denotes a continuous, nondecreasing function with $\kappa(0) = 0$, and further conditions, which are precisely stated in \cite{Cao}, must be fulfilled. These conditions are satisfied if $\kappa$ is a continuous, nondecreasing and concave function such that
\begin{align}\label{int-at-zero-infty}
\int_0^{\epsilon} \frac{1}{\kappa(u)} du = \infty \quad \text{for each $\epsilon > 0$.}
\end{align}
In particular, we may choose $\kappa(u) = u$ for $u \in \mathbb{R}_+$, and consequently, both results, Theorem \ref{thm-compare} and \cite[Theorem 2.1]{Cao}, cover the classical situation, where global Lipschitz conditions are imposed.

However, there are situations where \cite[Theorem 2.1]{Cao} can be applied, while Theorem \ref{thm-compare} does not apply, and vice versa. For the sake of simplicity, in the following two examples we assume that $\mathcal{H} = \mathbb{R}$ and $b \equiv c \equiv 0$.

\begin{example}
We fix an arbitrary constant $0 < \delta < \exp(-1)$ and define the functions $\kappa,\rho : \mathbb{R}_+ \rightarrow \mathbb{R}_+$ by
\begin{align*}
\kappa(u) :=
\begin{cases}
0, & u=0,
\\ -u \ln u, & 0 < u < \delta,
\\ -\delta \ln \delta - (1 + \ln \delta) (u-\delta) & u \geq \delta,
\end{cases}
\end{align*}
as well as
\begin{align*}
\rho(u) :=
\begin{cases}
0, & u=0,
\\ u \sqrt{-\ln (u^2)}, & 0 < u < \sqrt{\delta},
\\ \sqrt{-\delta \ln \delta - (1 + \ln \delta) (u^2-\delta)} & u \geq \sqrt{\delta},
\end{cases}
\end{align*}
cf. \cite[Remark 1]{Yamada}. Let $a : \mathbb{R} \rightarrow \mathbb{R}$ be a mapping such that
\begin{align*}
|a(y_1) - a(y_2)| \leq \rho(|y_1 - y_2|) \quad \text{for all $y_1,y_2 \in \mathbb{R}$.}
\end{align*}
Then we have the estimate
\begin{align*}
|a(y_1) - a(y_2)|^2 \leq \kappa(|y_1 - y_2|^2) \quad \text{for all $y_1,y_2 \in \mathbb{R}$,}
\end{align*}
showing that condition (\ref{cond-Zhang}) with $p=2$ is satisfied. Moreover, $\kappa$ is a continuous, nondecreasing, concave function and condition (\ref{int-at-zero-infty}) is satisfied, because for each $0 < \epsilon < \delta$ we have
\begin{align*}
\int_0^{\epsilon} \frac{1}{\kappa(u)} du = -\int_0^{\epsilon} \frac{1}{u \ln u} du = -\ln |\ln u| \, \Big|_{u=0}^{u=\epsilon} = -\ln |\ln \epsilon| + \lim_{u \rightarrow 0} \ln |\ln u| = \infty.
\end{align*}
Consequently, \cite[Theorem 2.1]{Cao} applies. However, we have
\begin{align*}
\rho'(u) = \sqrt{-\ln(u^2)} - \frac{1}{\sqrt{-\ln(u^2)}} \quad \text{for $u \in (0,\sqrt{\delta})$,}
\end{align*}
and thus $\lim_{u \rightarrow 0} \rho'(u) = \infty$. Therefore, the mapping $a : \mathbb{R} \rightarrow \mathbb{R}$ might fail to be locally Lipschitz, and hence, Theorem \ref{thm-compare} does not apply.
\end{example}

\begin{example}
Let us define the mapping $a : \mathbb{R} \rightarrow \mathbb{R}$ as follows. For $n \in \mathbb{N}_0$ we define $a$ on the interval $[n,n+1]$ by
\begin{align*}
a(y) :=
\begin{cases}
n, & y \in [n, n+1 - \frac{1}{n+1}],
\\ n + (n+1) \big( y - (n+1 - \frac{1}{n+1}) \big), & y \in [n+1 - \frac{1}{n+1}, n+1].
\end{cases}
\end{align*}
This defines the mapping $a : \mathbb{R}_+ \rightarrow \mathbb{R}$, which we extend to a mapping $a : \mathbb{R} \rightarrow \mathbb{R}$ by symmetry
\begin{align*}
a(y) := a(-y), \quad y \in \mathbb{R}_-. 
\end{align*}
Then, $a$ is locally Lipschitz and satisfies the linear growth condition, and hence, Theorem \ref{thm-compare} applies. However, there are no constant $p \geq 2$ and no continuous, nondecreasing function $\kappa : \mathbb{R}_+ \rightarrow \mathbb{R}_+$ with $\kappa(0) = 0$ such that
\begin{align}\label{est-Zhang}
|a(y_1) - a(y_2)|^p \leq \kappa(|y_1 - y_2|^p) \quad \text{for all $y_1,y_2 \in \mathbb{R}$.}
\end{align}
Suppose, on the contrary, there exists a continuous, nondecreasing function  $\kappa : \mathbb{R}_+ \rightarrow \mathbb{R}_+$ with $\kappa(0) = 0$ fulfilling (\ref{est-Zhang}). Then we have
\begin{align}\label{kappa-geq-one}
|\kappa(u)| \geq 1 \quad \text{for all $u \in (0,1]$.}
\end{align}
Indeed, let $u \in (0,1]$ be arbitrary. Then, there exists $n \in \mathbb{N}$ with $\frac{1}{n} \leq u$. Moreover, by the definition of the mapping $a : \mathbb{R} \rightarrow \mathbb{R}$ there are $y_1,y_2 \in \mathbb{R}$ such that
\begin{align*}
|y_1 - y_2| \leq \bigg( \frac{1}{n} \bigg)^{1/p} \quad \text{and} \quad |a(y_1) - a(y_2)| = 1.
\end{align*}
Therefore, using the monotonicity of $\kappa$ and (\ref{est-Zhang}) we obtain
\begin{align*}
\kappa(u) \geq \kappa \bigg( \frac{1}{n} \bigg) \geq \kappa (|y_1 - y_2|^p) \geq |a(y_1) - a(y_2)|^p = 1,
\end{align*}
showing (\ref{kappa-geq-one}). Now, the continuity of $\kappa$ yields the contradiction $\kappa(0) \geq 1$. Consequently, condition (\ref{cond-Zhang}) is not satisfied, and thus, we cannot use \cite[Theorem~2.1]{Cao} in this case.
\end{example}

\section{Existence and uniqueness of mild solutions to Hilbert space valued SPDEs}\label{sec-SPDE}

In this section, we establish existence and uniqueness of (local) mild solutions to Hilbert space valued SPDEs of the type (\ref{SPDE}).

Let $H$ be a separable Hilbert space, let $(S_t)_{t \geq 0}$ be a $C_0$-semigroup on $H$ with infinitesimal generator $A : \mathcal{D}(A) \subset H \rightarrow H$, and let $B \in \mathcal{E}$ be a set with $F(B^c) < \infty$. Furthermore, let $\alpha : \Omega \times \mathbb{R}_+ \times H \rightarrow H$ and $\sigma : \Omega \times \mathbb{R}_+ \times H \rightarrow L_2^0(H)$ be $\mathcal{P} \otimes \mathcal{B}(H)$-measurable mappings, and let $\gamma : \Omega \times \mathbb{R}_+ \times H \times E \rightarrow H$ be a $\mathcal{P} \otimes \mathcal{B}(H) \otimes \mathcal{E}$-measurable mapping.

Throughout this section, we suppose that there exist another separable Hilbert space $\mathcal{H}$, a
$C_0$-group $(U_t)_{t \in \mathbb{R}}$ on $\mathcal{H}$ and
continuous linear operators $\ell \in L(H,\mathcal{H})$, $\pi \in
L(\mathcal{H},H)$ such that the diagram
\[ \begin{CD}
\mathcal{H} @>U_t>> \mathcal{H}\\
@AA\ell A @VV\pi V\\
H @>S_t>> H
\end{CD} \]
commutes for every $t \in \mathbb{R}_+$, that is
\begin{align}\label{diagram-commutes}
\pi U_t \ell = S_t \quad \text{for all $t \in \mathbb{R}_+$.}
\end{align}

\begin{remark}
According to \cite[Prop. 8.7]{SPDE}, this assumption is satisfied if the semigroup $(S_t)_{t \geq 0}$ is \emph{pseudo-contractive} (one also uses the notion \emph{quasi-contractive}), that is, there is a constant $\omega \in \mathbb{R}$ such that
\begin{align*}
\| S_t \| \leq e^{\omega t} \quad \text{for all $t \geq 0$.}
\end{align*}
This result relies on the Sz\H{o}kefalvi-Nagy theorem on unitary dilations (see e.g. \cite[Thm. I.8.1]{Nagy}, or \cite[Sec. 7.2]{Davies}). In the spirit of \cite{Nagy}, the group $(U_t)_{t \in \mathbb{R}}$ is called a \emph{dilation} of the semigroup $(S_t)_{t \geq 0}$.
\end{remark}

\begin{remark}
The Sz\H{o}kefalvi-Nagy theorem was also utilized in \cite{Seidler, Seidler2} in order to establish results concerning stochastic convolution integrals.
\end{remark}

Now, we define the mappings $a : \Omega \times \mathbb{R}_+ \times \mathcal{H} \rightarrow \mathcal{H}$, $b : \Omega \times \mathbb{R}_+ \times \mathcal{H} \rightarrow L_2^0(\mathcal{H})$ and $c : \Omega \times \mathbb{R}_+ \times \mathcal{H} \times E \rightarrow \mathcal{H}$ by
\begin{align}\label{def-a}
a(t,y) &:= U_{-t} \ell \alpha (t,\pi U_t y), 
\\ \label{def-b} b(t,y) &:= U_{-t} \ell \sigma (t,\pi U_t y),
\\ \label{def-c} c(t,y,x) &:= U_{-t} \ell \gamma (t,\pi U_t y,x).
\end{align}
Note that $a$ and $b$ are $\mathcal{P} \otimes \mathcal{B}(\mathcal{H})$-measurable, and that $c$ is $\mathcal{P} \otimes \mathcal{B}(\mathcal{H}) \otimes \mathcal{E}$-measurable.

\begin{lemma}\label{lemma-Lipschitz-transfer}
The following statements are true:
\begin{enumerate}
\item If $(\alpha,\sigma,\gamma|_B)$ are locally Lipschitz, then $(a,b,c|_B)$ are locally Lipschitz, too.

\item If $(\alpha,\sigma,\gamma|_B)$ satisfy the linear growth condition, then $(a,b,c|_B)$ satisfy the linear growth condition, too.

\item If $(\alpha,\sigma,\gamma|_B)$ are locally bounded, then $(a,b,c|_B)$ are locally bounded, too.
\end{enumerate}
\end{lemma}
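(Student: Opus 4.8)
The plan is to transfer each of the three conditions through the representation (\ref{def-a})--(\ref{def-c}) by exploiting that $U_{-t}$, $\ell$, $\pi$ and $U_t$ are bounded linear operators: I pull them out of the norms appearing in the respective definitions and then invoke the corresponding condition for $(\alpha,\sigma,\gamma|_B)$, applied to the shifted arguments $\pi U_t y$. The one preliminary fact I need is that a $C_0$-group is locally bounded, i.e.\ there are constants $M \geq 1$ and $\omega \geq 0$ with $\| U_t \| \leq M e^{\omega |t|}$, so that $M_t := \sup_{|s| \leq t} \| U_s \|$ is finite for every $t$ and the map $t \mapsto M_t$ is non-decreasing.

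For the local Lipschitz part I would estimate, for $a$,
\[
\| a(t,y_1) - a(t,y_2) \| \leq \| U_{-t} \| \, \| \ell \| \, \| \alpha(t,\pi U_t y_1) - \alpha(t,\pi U_t y_2) \|,
\]
and observe that $\| \pi U_t y_i \| \leq \| \pi \| \, \| U_t \| \, \| y_i \|$, so that for $\| y_i \| \leq n$ the transformed arguments lie in a ball of radius at most $\| \pi \| M_t n$ on $[0,t]$. Applying the local Lipschitz estimate for $\alpha$ at the integer radius $m^*(t) := \lceil \| \pi \| M_t n \rceil$ together with $\| \pi U_t (y_1 - y_2) \| \leq \| \pi \| \, \| U_t \| \, \| y_1 - y_2 \|$ then produces the required bound. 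The finiteness requirement $\big( \int_B \| c(t,y,x) \|^2 F(dx) \big)^{1/2} < \infty$ and the Lipschitz estimate for $c$ are obtained the same way, factoring the $x$-independent operator norm $\| U_{-t} \| \, \| \ell \|$ out of the integral before invoking the condition on $\gamma$. Linear growth and local boundedness are handled identically: for linear growth I would use $1 + \| \pi U_t y \| \leq (1 + \| \pi \| \, \| U_t \|)(1 + \| y \|)$, and for local boundedness I would confine the transformed argument to the ball of radius $\| \pi \| M_t n$ exactly as above and apply the local boundedness of $\alpha,\sigma,\gamma|_B$.

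The main obstacle is not this chain of inequalities, which is routine, but arranging that the resulting constant functions are \emph{non-decreasing}, as the definitions demand. This is delicate because $t \mapsto \| U_t \|$ and $t \mapsto \| U_{-t} \|$ need not be monotone, and because the radius at which I must read off the local Lipschitz (or local boundedness) constant of $\alpha$ itself grows with $t$. I would resolve this by (i) replacing $\| U_{\pm t} \|$ with the monotone envelope $M_t$; (ii) assuming without loss of generality that each family $L_m^{(\alpha)}$ is non-decreasing in the index $m$ (pointwise in $t$, by passing to $\max_{j \leq m} L_j^{(\alpha)}$), so that $L_{m^*(t)}^{(\alpha)}(t)$ is non-decreasing in $t$ since both the index $m^*(t)$ and the time argument increase; and (iii) noting that products and maxima of non-decreasing nonnegative functions remain non-decreasing. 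With these observations the transferred functions $\tilde L_n := \| \ell \| \, \| \pi \| M_t^2 L_{m^*(t)}^{(\alpha)}(t)$, and the analogous $\tilde K$ and $\tilde M_n$, are non-decreasing and dominate the pointwise estimates, which completes all three parts.
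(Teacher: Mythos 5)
Your proof is correct; the paper itself offers no argument for this lemma (its proof reads, in full, ``All three statements are straightforward to check''), and your estimates are precisely the details being left to the reader: factoring the operator norms of $U_{-t}$, $\ell$, $\pi$, $U_t$ out of the norms and the $F(dx)$-integral, enlarging the ball radius to $\lceil \|\pi\| M_t n \rceil$ before invoking the conditions on $(\alpha,\sigma,\gamma|_B)$, and restoring the non-decreasing requirement via the monotone envelope $M_t = \sup_{|s| \leq t} \|U_s\|$ (finite since a $C_0$-group satisfies $\|U_t\| \leq M e^{\omega |t|}$) together with the index-monotone replacement $\max_{j \leq m} L_j$. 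Your attention to the monotonicity of the transferred functions, which is the only point of the lemma that is not a one-line estimate, is handled correctly, so nothing is missing.
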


\begin{proof}
All three statements are straightforward to check.
\end{proof}

\begin{proposition}\label{prop-SDE-SPDE}
Let $z_0 : \Omega \rightarrow H$ be a $\mathcal{F}_0$-measurable random variable, and let $\tau$ be a stopping time. Then, the following statements are true:
\begin{enumerate}
\item If $Y$ is a local strong solution to (\ref{SDE}) with initial condition $\ell z_0$ and lifetime $\tau$, then $Z := \pi U Y$ is a local mild solution to (\ref{SPDE}) with initial condition $z_0$ and lifetime $\tau$.

\item If $Z$ is a local mild solution to (\ref{SPDE}) with initial condition $z_0$ and lifetime $\tau$, then the process $Y$ defined as
\begin{equation}\label{def-Y-Z}
\begin{aligned}
Y_t &:= \ell z_0 + \int_0^{t \wedge \tau} U_{-s} \ell \alpha(s,Z_s) ds + \int_0^{t \wedge \tau} U_{-s} \ell \sigma(s,Z_s) dW_s
\\ &\quad + \int_0^{t \wedge \tau} \int_B U_{-s} \ell \gamma(s,Z_{s-},x) (\mu(ds,dx) - F(dx)ds)
\\ &\quad + \int_0^{t \wedge \tau} \int_{B^c} U_{-s} \ell \gamma(s,Z_{s-},x) \mu(ds,dx), \quad t \geq 0
\end{aligned}
\end{equation}
is a local strong solution to (\ref{SDE}) with initial condition $\ell z_0$ and lifetime $\tau$, and we have $Z^{\tau} = \pi U Y^{\tau}$.
\end{enumerate}
\end{proposition}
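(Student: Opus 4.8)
The plan is to exploit the single algebraic identity underlying the moving frame: since $(U_t)_{t\in\mathbb{R}}$ is a group, for $0 \le s \le t$ we have $\pi U_t U_{-s}\ell = \pi U_{t-s}\ell = S_{t-s}$ by (\ref{diagram-commutes}). Combined with the definitions (\ref{def-a})--(\ref{def-c}), which read $a(s,y) = U_{-s}\ell\alpha(s,\pi U_s y)$ and likewise for $b$ and $c$, this lets me pass between the two integral equations simply by applying the fixed bounded operator $\pi U_t$ to the SDE and pulling it underneath each integral. The two statements are the two directions of this correspondence, so both rest on the same computation; the only genuine analytic input is that a fixed bounded linear operator commutes with the It\^{o} integral against $W$ and with the compensated Poisson integral over $B$ (a standard fact, see e.g. \cite{SPDE, Da_Prato}), whereas the large-jump integral over $B^c$ is $\mathbb{P}$-almost surely a finite sum because $F(B^c) < \infty$, so there the interchange is trivial.

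For statement (1) I start from the strong-solution identity for $Y$ (stopped at $\tau$) and apply $\pi U_t$ to both sides. Using $\pi U_t \ell z_0 = S_t z_0$ and, term by term, moving $\pi U_t$ inside each integral and invoking $\pi U_t U_{-s}\ell = S_{t-s}$, the integrands $U_{-s}\ell\alpha(s,\pi U_s Y_s)$, $U_{-s}\ell\sigma(s,\pi U_s Y_s)$ and $U_{-s}\ell\gamma(s,\pi U_s Y_{s-},x)$ turn into $S_{t-s}\alpha(s,Z_s)$, $S_{t-s}\sigma(s,Z_s)$ and $S_{t-s}\gamma(s,Z_{s-},x)$ once I write $Z_s = \pi U_s Y_s$. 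This is precisely the mild-solution equation for $Z = \pi U Y$ up to time $\tau$, so $Z$ is a local mild solution to (\ref{SPDE}) with initial condition $z_0$ and lifetime $\tau$; adaptedness and c\`{a}dl\`{a}g regularity of $Z$ follow from those of $Y$ together with the strong continuity of $t \mapsto \pi U_t$.

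For statement (2) I take $Y$ defined by (\ref{def-Y-Z}); it is adapted and c\`{a}dl\`{a}g as a sum of stochastic integrals. First I establish $Z^\tau = \pi U Y^\tau$: applying $\pi U_t$ to (\ref{def-Y-Z}) for $t \le \tau$ and carrying out the identical pull-through turns the right-hand side into $S_t z_0 + \int_0^t S_{t-s}\alpha(s,Z_s)\,ds + \dots$, which equals $Z_t$ because $Z$ is a mild solution; hence $Z_t = \pi U_t Y_t$ for $t \le \tau$. I then feed this back into the coefficients: for $s \le \tau$ I have $\pi U_s Y_s = Z_s$, and, since $u \mapsto \pi U_u$ is strongly continuous with locally bounded norm and $Y$ is c\`{a}dl\`{a}g, also $\pi U_s Y_{s-} = Z_{s-}$. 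Consequently the integrands in (\ref{def-Y-Z}) coincide with $a(s,Y_s)$, $b(s,Y_s)$ and $c(s,Y_{s-},x)$ by (\ref{def-a})--(\ref{def-c}), so $Y$ satisfies (\ref{SDE}) with initial condition $\ell z_0$ and lifetime $\tau$.

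The step I expect to be the crux is not any single computation but the bookkeeping that makes the pull-through legitimate: justifying that $\pi U_t$ (a fixed operator for each frozen $t$) may be moved inside the $W$- and compensated-$\mu$-integrals, and verifying the left-limit identity $Z_{s-} = \pi U_s Y_{s-}$ so that the jump integrands match exactly. The lifetime and stopping-time bookkeeping (integrals up to $t \wedge \tau$, and $Y = Y^\tau$ by construction in (\ref{def-Y-Z})) must be tracked consistently, but once the operator interchange and the left-limit identity are in hand, both statements reduce to reading off the converted equations.
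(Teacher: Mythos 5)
Your proposal is correct and follows essentially the same route as the paper: both directions rest on the dilation identity $\pi U_{t}U_{-s}\ell = S_{t-s}$ from (\ref{diagram-commutes}) together with pulling the bounded operator through the stochastic integrals, and for statement (2) you, like the paper, first establish $Z^{\tau} = \pi U Y^{\tau}$ from the mild equation and then substitute back into (\ref{def-Y-Z}) via (\ref{def-a})--(\ref{def-c}). Your explicit treatment of the left-limit identity $Z_{s-} = \pi U_s Y_{s-}$ is a point the paper passes over silently, but it does not change the argument.
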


\begin{proof}
Let $Y$ be a local strong solution to (\ref{SDE}) with initial condition $\ell z_0$ and lifetime $\tau$. Then we have
\begin{align*}
&Z_{t \wedge \tau} = \pi U_{t \wedge \tau} Y_{t \wedge \tau} = \pi U_{t \wedge \tau} \bigg( \ell z_0 + \int_0^{t \wedge \tau} a(s,Y_s) ds + \int_0^{t \wedge \tau} b(s,Y_s) dW_s 
\\ &\quad + \int_0^{t \wedge \tau} \int_B c(s,Y_{s-},x) (\mu(ds,dx) - F(dx)ds) + \int_0^{t \wedge \tau} \int_{B^c} c(s,Y_{s-},x) \mu(ds,dx) \bigg).
\end{align*}
By the Definitions (\ref{def-a})--(\ref{def-c}) of $a,b,c$ we obtain
\begin{align*}
Z_{t \wedge \tau} &= \pi U_{t \wedge \tau} \bigg( \ell z_0 + \int_0^{t \wedge \tau} U_{-s} \ell \alpha(s,\pi U_s Y_s) ds + \int_0^{t \wedge \tau} U_{-s} \ell \sigma(s,\pi U_s Y_s) dW_s 
\\ &\quad + \int_0^{t \wedge \tau} \int_B U_{-s} \ell \gamma(s,\pi U_s Y_{s-},x) (\mu(ds,dx) - F(dx)ds) 
\\ &\quad + \int_0^{t \wedge \tau} \int_{B^c} U_{-s} \ell \gamma(s,\pi U_s Y_{s-},x) \mu(ds,dx) \bigg).
\end{align*}
Therefore, by (\ref{diagram-commutes}), and since $Z = \pi U Y$, we arrive at 
\begin{equation}\label{var-const-stopped}
\begin{aligned}
Z_{t \wedge \tau} &= S_{t \wedge \tau} z_0 + \int_0^{t \wedge \tau} S_{(t \wedge \tau)-s} \alpha(s,Z_s) ds + \int_0^{t \wedge \tau} S_{(t \wedge \tau)-s} \sigma(s,Z_s) dW_s 
\\ &\quad + \int_0^{t \wedge \tau} \int_B S_{(t \wedge \tau)-s} \gamma(s,Z_{s-},x) (\mu(ds,dx) - F(dx)ds) 
\\ &\quad + \int_0^{t \wedge \tau} \int_{B^c} S_{(t \wedge \tau)-s} \gamma(s,Z_{s-},x) \mu(ds,dx),
\end{aligned}
\end{equation}
showing that $Z$ is a local mild solution to (\ref{SPDE}) with initial condition $z_0$ and lifetime $\tau$. This establishes the first statement.
Now, let $Z$ be a local mild solution to (\ref{SPDE}) with initial condition $z_0$ and lifetime $\tau$. Then we have (\ref{var-const-stopped}), and therefore, by (\ref{diagram-commutes}) and the Definition (\ref{def-Y-Z}) of $Y$ we obtain
\begin{align*}
Z_{t \wedge \tau} &= \pi U_{t \wedge \tau} \bigg( \ell z_0 + \int_0^{t \wedge \tau} U_{-s} \ell \alpha(s,Z_s) ds + \int_0^{t \wedge \tau} U_{-s} \ell \sigma(s,Z_s) dW_s 
\\ &\quad + \int_0^{t \wedge \tau} \int_B U_{-s} \ell \gamma(s,Z_{s-},x) (\mu(ds,dx) - F(dx)ds) 
\\ &\quad + \int_0^{t \wedge \tau} \int_{B^c} U_{-s} \ell \gamma(s,Z_{s-},x) \mu(ds,dx) \bigg)
= \pi U_{t \wedge \tau} Y_{t \wedge \tau},
\end{align*}
showing that $Z^{\tau} = \pi U Y^{\tau}$. Therefore, by the Definition (\ref{def-Y-Z}) of $Y$ we obtain
\begin{align*}
Y_{t \wedge \tau} &= \ell z_0 + \int_0^{t \wedge \tau} U_{-s} \ell \alpha(s,\pi U_s Y_s) ds + \int_0^{t \wedge \tau} U_{-s} \ell \sigma(s,\pi U_s Y_s) dW_s
\\ &\quad + \int_0^{t \wedge \tau} \int_B U_{-s} \ell \gamma(s,\pi U_s Y_{s-},x) (\mu(ds,dx) - F(dx)ds)
\\ &\quad + \int_0^{t \wedge \tau} \int_{B^c} U_{-s} \ell \gamma(s,\pi U_s Y_{s-},x) \mu(ds,dx).
\end{align*}
Taking into account the Definitions (\ref{def-a})--(\ref{def-c}) of $a,b,c$, we get
\begin{align*}
&Y_{t \wedge \tau} = \ell z_0 + \int_0^{t \wedge \tau} a(s,Y_s) ds  + \int_0^{t \wedge \tau} b(s,Y_s) dW_s 
\\ &\quad + \int_0^{t \wedge \tau} \int_B c(s,Y_{s-},x) (\mu(ds,dx) - F(dx)ds) + \int_0^{t \wedge \tau} \int_{B^c} c(s,Y_{s-},x) \mu(ds,dx),
\end{align*}
showing that $Y$ is a local strong solution to (\ref{SDE}) with initial condition $\ell z_0$ and lifetime $\tau$.
\end{proof}

\begin{theorem}\label{thm-main}
The following statements are true:
\begin{enumerate}
\item If $(\alpha,\sigma,\gamma|_B)$ are locally Lipschitz and satisfy the linear growth condition, then existence and uniqueness of mild solutions to (\ref{SPDE}) holds.

\item If $(\alpha,\sigma,\gamma|_B)$ are locally Lipschitz and locally bounded, then existence and uniqueness of local mild solutions to (\ref{SPDE}) holds.

\item If $(\alpha,\sigma,\gamma|_B)$ are locally Lipschitz, then uniqueness of local mild solutions to (\ref{SPDE}) holds.
\end{enumerate}
\end{theorem}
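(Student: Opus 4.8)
The plan is to reduce Theorem \ref{thm-main} entirely to the corresponding SDE results proved in Section \ref{sec-SDE}, using the dictionary established in Proposition \ref{prop-SDE-SPDE} together with the transfer of regularity conditions from $(\alpha,\sigma,\gamma|_B)$ to $(a,b,c|_B)$ recorded in Lemma \ref{lemma-Lipschitz-transfer}. The key observation is that Proposition \ref{prop-SDE-SPDE} sets up a bijective correspondence (up to the relevant stopping time) between mild solutions $Z$ of the SPDE (\ref{SPDE}) and strong solutions $Y$ of the SDE (\ref{SDE}), via $Z = \pi U Y$ in one direction and the explicit formula (\ref{def-Y-Z}) in the other, with lifetimes preserved. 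Since all three regularity hypotheses (locally Lipschitz, linear growth, locally bounded) are inherited by $(a,b,c|_B)$ thanks to Lemma \ref{lemma-Lipschitz-transfer}, the corresponding SDE theorems apply directly to the transformed coefficients.

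Concretely, for statement (i) I would argue as follows. Let $z_0 : \Omega \rightarrow H$ be an arbitrary $\mathcal{F}_0$-measurable random variable. By Lemma \ref{lemma-Lipschitz-transfer}, the mappings $(a,b,c|_B)$ defined in (\ref{def-a})--(\ref{def-c}) are locally Lipschitz and satisfy the linear growth condition. Hence by Theorem \ref{thm-SDE-existence} there exists a strong solution $Y$ to (\ref{SDE}) with initial condition $\ell z_0$, and by the first part of Proposition \ref{prop-SDE-SPDE} (with $\tau = \infty$) the process $Z := \pi U Y$ is a mild solution to (\ref{SPDE}) with initial condition $z_0$; this gives existence. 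For uniqueness, suppose $Z$ and $Z'$ are two mild solutions with initial conditions $z_0$ and $z_0'$. The second part of Proposition \ref{prop-SDE-SPDE} produces associated strong solutions $Y$ and $Y'$ of (\ref{SDE}) with initial conditions $\ell z_0$ and $\ell z_0'$. On the event $\{ z_0 = z_0' \}$ we have $\ell z_0 = \ell z_0'$, so Theorem \ref{thm-SDE-unique} (whose hypothesis is met since $(a,b,c|_B)$ are locally Lipschitz by Lemma \ref{lemma-Lipschitz-transfer}) yields $Y \mathbbm{1}_{\{ z_0 = z_0' \}} = Y' \mathbbm{1}_{\{ z_0 = z_0' \}}$ up to indistinguishability, and applying $\pi U$ gives $Z \mathbbm{1}_{\{ z_0 = z_0' \}} = Z' \mathbbm{1}_{\{ z_0 = z_0' \}}$.

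Statements (ii) and (iii) follow by the same template, merely substituting the appropriate SDE input theorem and keeping track of lifetimes. For (ii), I would invoke Theorem \ref{thm-SDE-ex-local} for local existence (using that $(a,b,c|_B)$ are locally Lipschitz and locally bounded) and Theorem \ref{thm-SDE-unique} for uniqueness, this time carrying the lifetimes $\tau$, $\tau'$ through Proposition \ref{prop-SDE-SPDE}, which preserves them, and using the stopped-process version of the uniqueness assertion with $Y^{\tau \wedge \tau'}$ and $(Y')^{\tau \wedge \tau'}$. For (iii), only Theorem \ref{thm-SDE-unique} is needed, since no existence claim is made. In each case the identity $Z^{\tau} = \pi U Y^{\tau}$ from Proposition \ref{prop-SDE-SPDE} lets me transport the SDE uniqueness statement for the stopped processes back to the SPDE level by applying the bounded linear operators $\pi U$.

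The proof is essentially a bookkeeping exercise, so I do not anticipate a genuine mathematical obstacle; the care required is entirely in the second direction of Proposition \ref{prop-SDE-SPDE}. The subtle point is that given two mild solutions $Z, Z'$, one must reconstruct the corresponding SDE solutions $Y, Y'$ via the explicit transformation (\ref{def-Y-Z}) and verify that the reconstruction is consistent on the event $\{ z_0 = z_0' \}$ and up to the joint lifetime $\tau \wedge \tau'$, so that pushing the SDE uniqueness conclusion forward through $\pi U$ really recovers uniqueness of the mild solutions rather than merely of their transforms. Since $Z^{\tau} = \pi U Y^{\tau}$ holds as an identity of processes and $\pi U$ is (for each fixed $t$) a bounded linear map, no information is lost in this final transfer, and the argument closes.
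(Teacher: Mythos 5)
Your proposal is correct and takes essentially the same route as the paper: both reduce Theorem \ref{thm-main} to the SDE results (Theorems \ref{thm-SDE-unique}, \ref{thm-SDE-existence} and \ref{thm-SDE-ex-local}) via the solution correspondence of Proposition \ref{prop-SDE-SPDE} and the transfer of the regularity conditions in Lemma \ref{lemma-Lipschitz-transfer}. In particular, your uniqueness argument---using the inclusion $\{ z_0 = z_0' \} \subset \{ \ell z_0 = \ell z_0' \}$ to invoke Theorem \ref{thm-SDE-unique} and then transporting the stopped identity back to the SPDE level through $\pi U$---is exactly what the paper does.
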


\begin{proof}
Suppose that $(\alpha,\sigma,\gamma|_B)$ are locally Lipschitz. Let $Z$ and $Z'$ be two local mild solutions to (\ref{SPDE}) with initial conditions $z_0$ and $z_0'$, and lifetimes $\tau$ and $\tau'$. We define the $\mathcal{H}$-valued processes $Y$ and $Y'$ according to (\ref{def-Y-Z}). By Proposition \ref{prop-SDE-SPDE}, the processes $Y$ and $Y'$ are local strong solutions to (\ref{SDE}) with initial conditions $\ell z_0$ and $\ell z_0$, and lifetimes $\tau$ and $\tau'$, and we have $Z^{\tau} = \pi U Y^{\tau}$ and $(Z')^{\tau'} = \pi U (Y')^{\tau'}$. By Lemma \ref{lemma-Lipschitz-transfer}, the mappings $(a,b,c|_B)$ are also locally Lipschitz, and hence, Theorem \ref{thm-SDE-unique} yields that up to indistinguishability
\begin{align*}
Y^{\tau \wedge \tau'} \mathbbm{1}_{\{ \ell z_0 = \ell z_0' \}} = (Y')^{\tau \wedge \tau'} \mathbbm{1}_{\{ \ell z_0 = \ell z_0' \}}
\end{align*}
Therefore, we have up to indistinguishability
\begin{align*}
Z^{\tau \wedge \tau'} \mathbbm{1}_{\{ z_0 = z_0' \}} = \pi U Y^{\tau \wedge \tau'} \mathbbm{1}_{\{ z_0 = z_0' \}} = \pi U (Y')^{\tau \wedge \tau'} \mathbbm{1}_{\{ z_0 = z_0' \}} = (Z')^{\tau \wedge \tau'} \mathbbm{1}_{\{ z_0 = z_0' \}},
\end{align*}
proving uniqueness of local mild solutions to (\ref{SPDE}).

Now, we suppose that $(\alpha,\sigma,\gamma|_B)$ are locally Lipschitz and satisfy the linear growth condition. Let $z_0 : \Omega \rightarrow H$ be an arbitrary $\mathcal{F}_0$-measurable random variable. By Lemma \ref{lemma-Lipschitz-transfer}, the mappings $(a,b,c|_B)$ are also locally Lipschitz and satisfy the linear growth condition. Thus, 
by Theorem \ref{thm-SDE-existence} there exists a strong solution $Y$ to (\ref{SDE}) with initial condition $\ell z_0$. According to Proposition \ref{prop-SDE-SPDE}, the process $Z := \pi U Y$ is a mild solution to (\ref{SPDE}) with initial condition $z_0$, proving the existence of mild solutions to (\ref{SPDE}). 

If $(\alpha,\sigma,\gamma|_B)$ are locally Lipschitz and locally bounded, then a similar proof, which uses Theorem \ref{thm-SDE-ex-local}, shows that existence of local mild solutions to (\ref{SPDE}) holds.
\end{proof}

\begin{remark}
The structure $Z = \pi U Y$ shows that mild solutions to (\ref{SPDE}) obtained from Theorem \ref{thm-main} have c\`{a}dl\`{a}g sample paths.
\end{remark}

\begin{remark}
As pointed out in \cite{Marinelli-Prevot-Roeckner}, the existence of weak solutions to (\ref{SPDE}) relies on a suitable stochastic Fubini theorem. Sufficient conditions can be found in \cite{SPDE}.
\end{remark}

\end{document}